\documentclass[12pt]{amsart}

\usepackage{geometry}
\usepackage{amsmath,amssymb,amsfonts}
\usepackage{kpfonts}
\usepackage{mathtools,enumerate}
\usepackage{bbm}
\usepackage{xcolor}
\usepackage{hyperref}

\newtheorem{theorem}{Theorem}

\newtheorem{df}{Definition}
\newtheorem{lemma}{Lemma}

\newtheorem{corollary}{Corollary}

\newtheorem{rem}{Remark}

\newcommand{\pto}{\overset{\mathbb{P}}{\to}}

\newcommand{\1}{\mathbbm{1}}

\renewcommand{\d}{{\rm d}}

\renewcommand{\epsilon}{\varepsilon}
\newcommand{\eps}{\varepsilon}
\newcommand{\R}{\mathbb{R}}

\newcommand{\N}{\mathbb{N}}

\newcommand{\E}{\mathbb{E}}
\newcommand{\ZZ}{\mathbb{Z}}
\newcommand{\MM}{\mathbb{M}}
\newcommand{\XX}{\mathbb{X}}

\newcommand{\A}{\mathcal{A}}

\newcommand{\M}{\mathcal{M}}
\newcommand{\sF}{\mathcal{F}}

\newcommand{\Prob}[1]{\mathbb{P}\left\{#1\right\}}

\newcommand{\salg}{\mathsf{F}}
\newcommand{\mmark}{\mathsf{m}}

\begin{document}

\begin{abstract}We consider multiple and set-indexed sums of random vectors
  taking values in Euclidean space of growing dimension. It is shown
  that, when viewed as finite metric spaces, the sets of values of
  such sums converge in probability. The limit is identified as a
  generalisation of the Wiener spiral, which appears as the
  high-dimensional limit of single-index sums.
\end{abstract}

\title{Set-indexed and multiple sums in high dimensions}

\author{Bochen Jin} 
\address{Bochen Jin, Ilya Molchanov: Institute of Mathematical Statistics and Actuarial Science, University of Bern, Alpeneggstr. 22, 3012 Bern, Switzerland}
\email{bochen.jin@unibe.ch, ilya.molchanov@stat.unibe.ch}

\author{Alexander Marynych}
\address{Alexander Marynych: Faculty of Computer Science and Cybernetics, Taras Shevchenko National University of Kyiv, Volodymyrska 60, 01601 Kyiv, Ukraine; Department of Mathematics, Kyiv School of Economics, Mykoly Shpaka 3, 03113 Kyiv, Ukraine}
\email{marynych@knu.ua, omarynych@kse.org.ua}

\author{Ilya Molchanov}

\date{\today}

\keywords{high-dimensional limit; random metric space;
  random walk; Wiener spiral}

\subjclass[2020]{Primary 60G50; 60G60; Secondary 62G30}

\maketitle

\section{Introduction}
\label{sec:introduction}

A \emph{Wiener spiral} $\mathbb{WS}$ is a compact metric space
isometric to a curve in $L_2([0,1])$ given by $[0,1]\ni t\mapsto \1_{[0,t]}(\cdot)\in L_2([0,1])$.
Alternatively, $\mathbb{WS}$ is isometric to a unit interval $[0,1]$
endowed with the distance $\rho_{\mathbb{WS}}(t,s)=\sqrt{|t-s|}$, $0\leq t,s\leq 1$.
It was shown in~\cite{Kabluchko+Marynych:2024} that the Wiener spiral arises naturally as a scaling limit of random walk paths with finite-second-moment increments in increasing dimensions, when these paths are viewed as compact metric subspaces of Euclidean spaces of growing dimension. The
heavy-tailed case was analyzed in~\cite{jin:mol25,KabMarRasch:2024}.  More recently, 
in~\cite{jin_2026}, these results were extended to the non-Euclidean setting of $\ell_p$ spaces.

In the present paper, we continue this line of work by considering
both multiple sums and set-indexed sums. We begin by introducing a
generalization of the Wiener spiral.  To this end, let $\XX$ be a
measurable space equipped with a $\sigma$-algebra $\salg$ and a
probability measure $\mu$. Endow $\salg$ with the pseudometric
\begin{equation}
  \label{eq:rho-mu}
  \rho_{\mu}(A,B):=\mu(A\triangle B)^{1/2},\quad A,B\in\salg.
\end{equation}
Identifying sets that differ by a $\mu$-null set turns $\rho_{\mu}$
into a metric; we continue to denote the resulting quotient space by
$(\salg,\rho_{\mu})$. An isometric realization of $(\salg,\rho_{\mu})$
is obtained by embedding indicator functions $\1_A$, $A\in\salg$,
into $L_2(\XX,\mu)$. Another realization is given by the collection
$\{\zeta(A):A\in\salg\}\subset L_2(\Omega)$, where
$(\zeta(A))_{A\in\salg}$ is a Gaussian isonormal set-indexed process
with control measure $\mu$, defined on a probability space
$(\Omega,\mathscr{F},\mathbb{P})$. Recall that this is a centered
Gaussian process with covariance structure
$\E\zeta(A)\zeta(B)=\mu(A\cap B)$. The stated isometries follow from
\begin{multline*}
  \|\zeta(A)-\zeta(B)\|_{L_2(\Omega)}^2
  =\E(\zeta(A)-\zeta(B))^2=\mu(A)+\mu(B)-2\mu(A\cap B)\\
  =\mu(A\triangle B)=\|\1_A-\1_B\|_{L_2(\XX,\mu)},\quad A,B\in \salg.
\end{multline*}

\begin{df}
Let $\A$ be a non-empty subset of $\salg$. A Wiener spiral on $\A$ with governing measure $\mu$ is any metric space that is isometric to the subspace $(\A,\rho_{\mu})$ of $(\salg,\rho_\mu)$.
\end{df}

Note that the classical Wiener spiral arises in this way by choosing
$\XX=[0,1]$ with $\mu$ being the Lebesgue measure and the family
$\A=\{[0,t]:t\in[0,1]\}$. Letting $\XX=[0,1]^m$ with the Borel
$\sigma$-algebra and the Lebesgue measure and choosing the family of
rectangles $\A:=\{[0,t_1]\times\cdots\times[0,t_m]\,:\,t_1,\dots,t_m\in[0,1]\}$
yields the \emph{$m$-variate Wiener spiral} denoted by
$\mathbb{WS}_m$. This space is a compact metric space, which is
isometric to a hyper-surface in $L_2([0,1]^m)$ defined by the family
of indicators
$[0,1]^m \ni (t_1,\ldots,t_m)\mapsto\1_{\prod_{i=1}^{m}[0,t_i]}(\cdot)\in L_2([0,1]^m)$.
In this setting the Gaussian isonormal process
$
  (\zeta(t_1,\ldots,t_m))_{t_1,\ldots,t_m\in
  [0,1]}=(\zeta([0,t_1]\times\cdots\times [0,t_m]))_{t_1,\ldots,t_m\in
  [0,1]}
$
is the standard $(1,m)$-Brownian sheet. If $\mu$ is a Dirac measure at
$x\in \XX$, then $(\salg,\rho_\mu)$ is isometric to a two-point
discrete metric space.

We prove that the $m$-variate Wiener spiral arises as the limit of
  ranges of multiple sums, while the ranges of set-indexed sums
  converge to a generic Wiener spiral.

The paper is organised as follows. Section~\ref{sec:setup} describes the limiting schemes that give rise to generalized Wiener spirals and states the main results. The proofs are given in Sections~\ref{sec:proof:-muptiple-sums} and~\ref{sec:proof:-set-indexed}, which treat, respectively, the cases of multiple sums and set-indexed sums. These arguments rely on the theory of multivariate martingales in the former case and on the theory of empirical processes in the latter.

\section{Setup and main results}\label{sec:setup}

Let $M$ be any measurable space (interpreted as indices or
marks). Suppose that the underlying probability space
$(\Omega,\mathscr{F},\mathbb{P})$ is rich enough to accommodate a
collection
\begin{equation}\label{eq:nu_def}
  \nu_{n}^{(d)}:=\sum_{j=1}^{N_{n}}
  \delta_{(X^{(d)}(j),\mmark^{(d)}(j))},\quad n,d\in\mathbb{N}.
\end{equation}
of finite random counting measures on $\R^d\times M$ with deterministic total
masses $N_n=\nu_{n}^{(d)}(\R^d\times M)$. Thus, $d$ represents the
ambient dimension, while $n$ controls the total mass. Throughout,
$n=n(d)$ is an arbitrary sequence with $n(d)\to\infty$ as
$d\to\infty$; hence limits as $d\to\infty$ implicitly include
$n(d)\to\infty$. We assume that $(X^{(d)}(j))_{1\leq j\leq N_n}$ are
independent random vectors distributed as $X^{(d)}$ taking values in
$\R^d$ and interpret $\mmark^{(d)}(j)$ as a mark of
$X^{(d)}(j)$. For a measurable set $A\subset M$, put
\begin{equation}\label{eq:s_def}
  S_n^{(d)}(A):=\int_{\R^d \times A}x\nu_{n}^{(d)}({\rm d}x,{\rm d}\mmark)
  =\sum_{j=1}^{N_{n}}X^{(d)}(j)\1_{\mmark^{(d)}(j)\in A}.
\end{equation}
Let $\M$ be a family of measurable subsets of $M$. Define a finite
subset of $\R^d$ by letting
\begin{equation}
  \label{eq:points_random_walk}
  \MM_n^{(d)}(\M):=\Big\{S_n^{(d)}(A)\,:\,A\in \M\Big\}.
\end{equation}
Although $\M$ may be infinite, the random set $\MM_n^{(d)}(\M)$ is
almost surely finite since $S_n^{(d)}(A)$ depends only on the subset
of indices $j \in \{1,\dots,N_n\}$ for which $\mmark^{(d)}(j)\in A$.
Hence, at most $2^{N_n}$ distinct sums can arise. If $\M$ is
sufficiently rich (contains all subsets of the (random) set of marks
$(\mmark^{(d)}(j))_{1\leq j\leq N_n}$), then the set $\MM_n^{(d)}(\M)$
consists of all $2^{N_n}$ sub-sums built from the collection
$(X^{(d)}(j))_{1\leq j\leq N_n}$. We consider $\MM_n^{(d)}(\M)$ as a
\emph{compact metric space} with the metric induced from the Euclidean
norm $\|\cdot\|_2$ on $\R^d$. The aim is to show that the suitably
normalised space $\MM_n^{(d)}(\M)$ converges in probability to a
Wiener spiral on an appropriate set $\mathcal{A}$ (with the structure
inherited from $\M$) as $d\to\infty$.

Convergence in probability of random compact metric spaces is
understood with respect to the \emph{Gromov–Hausdorff metric}. This
metric is defined as the infimum of the Hausdorff distances between
the images of two spaces under isometric maps to an arbitrary third
space, see \cite{MR1835418}.

Below we deal with two main examples of this setting.

\vspace{2mm}
\noindent
{\bf (MS):} The case of \emph{multiple sums} is obtained by putting
  $M=\ZZ^m$, for fixed $m\in\mathbb{N}$, and
  \begin{displaymath}
    \nu^{(d)}_n=\sum_{0\leq k_1,\dots,k_m\leq n}
    \delta_{(X^{(d)}(k_1,k_2,\ldots,k_m),(k_1,k_2,\ldots,k_m))}.
  \end{displaymath}
  In this case $N_n=(n+1)^m$ and we consider the family $\M$ which
  consists of integer rectangles
  $([0,k_1]\times \cdots \times [0,k_m])\cap \mathbb{Z}^m$. If $m=1$,
  this corresponds to
  $ \MM_n^{(d)}(\M)=\{\hat{S}^{(d)}(0),\hat{S}^{(d)}(1),\ldots,\hat{S}^{(d)}(n)\}$,
  where $\hat{S}^{(d)}(k):=\sum_{j=0}^{k}X^{(d)}(j)$, $1\leq k\leq n$,
  is a random walk in $\R^d$ with the generic step $X^{(d)}$.

\vspace{2mm}
\noindent
{\bf (SI):} The case of \emph{set-indexed} sums is obtained by putting
  $M=\XX$ with a probability measure $\mu$ on a $\sigma$-algebra $\salg$ and
  \begin{displaymath}
    \nu^{(d)}_n=\sum_{j=1}^{n}\delta_{(X^{(d)}(j),U(j))}
  \end{displaymath}
  with $(U(j))_{j\in\N}$ being i.i.d.\ with distribution $\mu$ and
  independent of $(X^{(d)}(j))_{j\geq 1}$. In this case, we assume
  that $\M$ is a VC class as defined below.

The \emph{Vapnik–\v{C}hervonenkis (VC) class} of sets is defined as
follows; see Section~2.6.1 of \cite{MR4628026}.  The family $\M$ is
said to shatter a finite set $I=\{x_1,\ldots,x_n\}\subset M$ if each
subset $J\subset I$ can be obtained as $I\cap A$ for some $A\in\M$.
The supremum of all $n$ such that $\M$ shatters all finite subsets of
cardinality $n$ is said to be the VC dimension of $\M$.  The family 
$\M$ is said to be a \emph{VC class} if its VC dimension is finite, that
is, there exists $n \in \mathbb{N}$ such that no subset of size $n+1$ 
is shattered by $\M$.

The case (MS) with $m=1$ was considered in
\cite{Kabluchko+Marynych:2024}, where it was shown that the space
$n^{-1/2}\MM_n^{(d)}(\M)$ converges to the standard Wiener spiral.
Recall the following conditions imposed in~\cite{Kabluchko+Marynych:2024}
on the distribution of the increment $X^{(d)}=(X_1^{(d)},\ldots,X_d^{(d)})$.
\begin{itemize}
\item[(a)] The increments are centered and normalised, that is,
  \begin{equation}
    \label{eq:assump_moments}
    \E X^{(d)}=0,\quad \E \|X^{(d)}\|_2^2=1.
  \end{equation}
\item[(b)] The components of $X^{(d)}$ are mutually uncorrelated, that
  is,
  \begin{equation}
    \label{eq:assump_uncor}
    \E [ X_{a}^{(d)}X_{a'}^{(d)}] =0,\quad a,a' \in \{1,\dots,
    d\},\quad  a\neq a' .
  \end{equation}
\item[(c)] The sequence $(\|X^{(d)}\|_2^2)_{d\in\N}$ is uniformly
  integrable, that is,
  \begin{equation}
    \label{eq:assump_uniform_integrability}
    \lim_{c\to\infty} \sup_{d\in \N} \E\left[\|X^{(d)}\|_2^2
      \1_{\{\|X^{(d)}\|_2^2>c\}}\right] = 0.
  \end{equation}
\item[(d)] The individual components of $X^{(d)}$ are negligible in
  the following sense:
  \begin{equation}\label{eq:assump_negligible}
    \lim_{d\to \infty}\max_{a\in\{1,\dots,d\}}\E (X_a^{(d)})^2=0.
  \end{equation}
\end{itemize}

\begin{theorem}
  \label{theo:GH_conv_to_wiener_spiral}
  Let $n=n(d)$ be an arbitrary sequence of
  positive integers such that $n(d)\to \infty$, as
  $d\to\infty$. Suppose that conditions (a)--(d) are fulfilled.
  \begin{enumerate}
  \item Assume the (MS) setting with fixed $m\in\mathbb{N}$.  Then, as
    $d\to\infty$, the random metric space $n^{-m/2}\MM_n^{(d)}(\M)$
    converges in probability to the $m$-variate Wiener spiral
    $\mathbb{WS}_m$. 
  \item Assume the (SI) setting with $\M$ being a VC class. Then, as
    $d\to\infty$, the random metric space $n^{-1/2}\MM_n^{(d)}(\M)$
    converges in probability to the Wiener spiral on $\M$ with the
    governing measure $\mu$.
  \end{enumerate}
\end{theorem}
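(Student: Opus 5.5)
The plan is to reduce Gromov--Hausdorff convergence to uniform convergence of squared distances along a natural correspondence. For a family of sets, take the correspondence pairing $n^{-m/2}S_n^{(d)}(A)$ (respectively $n^{-1/2}S_n^{(d)}(A)$) with the limiting point $A$ in $\A$. In (MS) the limit point is $\prod_i[0,k_i/n]$; in (SI) it is $A$ itself. The distortion of this correspondence bounds twice the GH distance. Also, $|\sqrt a-\sqrt b|\le\sqrt{|a-b|}$. Hence it suffices to prove
\[
\sup_{A,B}\Bigl|\,c_n\|S_n^{(d)}(A)-S_n^{(d)}(B)\|_2^2-\rho_\mu(A,B)^2\Bigr|\pto0 .
\]
Here $c_n=n^{-m}$ or $n^{-1}$, and in (MS) one additionally accounts for the deterministic discretisation error $|\mathrm{Leb}(\text{grid rectangle}\,\triangle\,\text{rectangle})|=O(1/n)$, which only affects the Hausdorff part.

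Write $S(A)-S(B)=\sum_j X^{(d)}(j)\,\epsilon_j$ with $\epsilon_j=\1_{\mmark(j)\in A}-\1_{\mmark(j)\in B}\in\{-1,0,1\}$, and expand the squared norm into a diagonal part and an off-diagonal part,
\[
\sum_j\|X^{(d)}(j)\|_2^2\epsilon_j^2+\sum_{i\ne j}\langle X^{(d)}(i),X^{(d)}(j)\rangle\epsilon_i\epsilon_j .
\]
Since $\epsilon_j^2=\1_{\mmark(j)\in A\triangle B}$, the diagonal part equals $\int\|x\|^2\1_{A\triangle B}(\mmark)\,\d\nu$.

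\textbf{Diagonal part.} In (MS), the marks are deterministic. Truncate $\|X\|^2$ at level $c$ using (c), and apply a uniform LLN for $\sum_{k\in R}\|X(k)\|^2$ over integer rectangles $R$. The centred sum is a multiparameter martingale, so Cairoli's maximal inequality gives $\max_R|\sum_{k\le R}(\|X(k)\|^2-1)|=o_P(n^m)$. Symmetric differences are unions of at most $2^m$ differences of rectangle sums, so the bound transfers. In (SI), the pairs $(\|X(j)\|^2\wedge c,U(j))$ are i.i.d., and the class $\{(x,u)\mapsto (x\wedge c)\1_{A\triangle B}(u)\}$ is uniformly Glivenko--Cantelli because $\{A\triangle B\}$ is VC when $\M$ is. Since the distribution of $\|X^{(d)}\|^2$ varies with $d$, I would use a triangular-array version of the VC inequality, whose bound is uniform in the underlying law. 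The truncation remainder is controlled in $L_1$ by (c), uniformly in $A,B$. The diagonal part therefore converges uniformly to $\mu(A\triangle B)$, using $\E\|X\|^2=1$.

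\textbf{Off-diagonal part (main obstacle).} We must show
\[
\sup_{A,B}\Bigl|\sum_{i\ne j}\langle X(i),X(j)\rangle\epsilon_i\epsilon_j\Bigr|=o_P(c_n^{-1}) .
\]
By polarisation the term is a combination of $Q(C)=\sum_{i\ne j\in C}\langle X(i),X(j)\rangle$ and cross terms $\sum_{i\in C,j\in D}$, where $C$ and $D$ range over sets of the form $A\setminus B$ and $B\setminus A$. The pointwise second moment is
\[
\E Q(C)^2=2\sum_{i\ne j}\E\langle X(i),X(j)\rangle^2=2N(N-1)\sum_{a,b}(\E X_aX_b)^2 ,
\]
and by (b) this equals $2N(N-1)\sum_a(\E X_a^2)^2\le 2N^2\max_a\E X_a^2$, which is $o(N^2)$ by (d). Here (d) replaces independence across coordinates; with truncation, (c) is needed to make these moments finite.

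For the supremum, I would proceed as follows in each setting. In (MS), $\sum_{i\ne j,\,i,j\le R}\langle X(i),X(j)\rangle$ indexed by rectangles $R$ is, after ordering pairs, a sum of orthogonal martingale-type increments of a $2m$-parameter martingale. Cairoli's $L_2$ maximal inequality then turns the pointwise bound into a supremum bound up to the constant $4^{2m}$, and set differences again reduce to finitely many rectangles. In (SI), condition on the marks $U$: the quadratic form is then a decoupled U-process indexed by a VC class. The plan there is to first discretise $\M$ to an $\epsilon$-net in $\rho_{\mu_n}$ of polynomial size, and control the increments between net points by Cauchy--Schwarz, using $\|\sum_{j\in C}X(j)\|^2$, which is itself bounded via the diagonal estimate plus the same quadratic form. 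Combining truncation (whose error is again handled by (c)) with a union bound then requires higher-moment bounds for the truncated U-statistics. This uniformity step is the expected difficulty.
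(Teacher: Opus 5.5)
Your reduction to uniform convergence of squared distances is sound, and so is your treatment of the diagonal part. Truncation plus Cairoli in (MS), and a uniform-in-law Glivenko--Cantelli bound in (SI), are valid substitutes for the paper's Lemma~A.1 with P\'olya--Dini and for the bracketing argument of Lemma~\ref{le:unif_main}. The genuine gap is the step you flag yourself: uniform control of the off-diagonal term in (SI). The plan you sketch for it would not go through.

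\textbf{The single-scale net is circular.} To bound $\|S_n^{(d)}(A)\|_2^2-\|S_n^{(d)}(A_k)\|_2^2$ by Cauchy--Schwarz you need $\|S_n^{(d)}(A\setminus A_k)\|_2^2$ and $\|S_n^{(d)}(A_k\setminus A)\|_2^2$. Their off-diagonal parts are suprema of the same quadratic form over the small sets in the cell, so one level of discretisation yields an inequality that is not self-improving.

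\textbf{The union bound fails.} Take a union over the $O(n^V)$ distinct traces $\{j:U(j)\in A\}$, where $V$ is the VC dimension of $\M$ (Sauer's lemma). For a single set the natural scale of $n^{-1}\sum_{i\ne j\in C}\langle X^{(d)}(i),X^{(d)}(j)\rangle$ is $\eta_d^{1/2}$, with $\eta_d:=\sum_a(\E(X_a^{(d)})^2)^2\le\max_a\E(X_a^{(d)})^2$. Even with the best (sub-exponential, order-two chaos) tails, the union costs a factor of order $V\log n$. Since $n=n(d)$ is arbitrary and (d) carries no rate, $\eta_d^{1/2}\log n(d)$ need not tend to zero, however many moments truncation buys.

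\textbf{The missing ingredient} is a chaining maximal inequality for degenerate U-processes whose uniform entropy does not depend on $n$. The paper applies Theorem~6 of~\cite{MR888439} to the VC-subgraph class $f_A(z,z')=\langle x,x'\rangle\1_{A\times A}(u,u')$. Its envelope is $F=|\langle x,x'\rangle|$, with $\E F(Z,Z')^2=\eta_d$. After symmetrisation the tails come from a Rademacher chaos, so only second moments are used. The covering numbers $K_1\eps^{-K_2}$ are free of $n$, which gives $n^{-1}\E\sup_{A\in\M}|\sum_{i\ne j}f_A(Z_i,Z_j)|\le C\eta_d^{1/2}\to0$. Truncation and (c) are not needed here, since $\E\langle X,X'\rangle^2=\eta_d$ is automatically finite.

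In (MS), your supremum over pairs is also not justified as written. The form $\sum_{{\bf i}\le{\bf u},\,{\bf j}\le{\bf v},\,{\bf i}\ne{\bf j}}\langle X^{(d)}({\bf i}),X^{(d)}({\bf j})\rangle$ is not an orthomartingale in $({\bf u},{\bf v})$ for the natural coordinatewise filtrations, because the same vectors enter both index ranges. You would have to decouple first; then $\langle S_{\bf u},S'_{\bf v}\rangle$, with $S'$ an independent copy, is a genuine $2m$-parameter orthomartingale.

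The paper avoids uniformity over pairs altogether:
\begin{enumerate}
\item It proves uniformity only for single anchored rectangles, where $Q^{(d)}({\bf k})$ is an honest $m$-parameter orthomartingale.
\item It uses pointwise convergence on a finite $1/l$-grid of pairs.
\item It controls the oscillation by applying the single-rectangle result to the identically distributed blocks $\xi^{(d)}_{{\bf i},n,l}$.
\end{enumerate}
In (SI) it does the analogous thing with a finite $\delta$-net $A_1,\dots,A_m$ in $L_1(\mu)$, applying the one-set uniform result to the VC classes $\{A\setminus A_k\}$ and $\{A_k\setminus A\}$. So only the single-set U-process bound is ever needed.
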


Note that different normalizing factors in the two cases are due to
the fact that the (MS) setting involves $n^m$ summands, while the
(SI) setting is constructed using $n$ summands. 

\begin{rem}
  Using the canonical embedding of $\R^d$ into the Hilbert space
  $\ell_2$ as the first $d$ coordinates, we may regard
  $\MM_n^{(d)}(\M)$ as a finite subset of $\ell_2$. Whenever
  $L_2(\XX,\mu)$ is separable, hence can also be canonically embedded
  into $\ell_2$, the generalized Wiener Spiral may also be
  isometrically realized in
  $\ell_2$. By~\cite[Section~2.3]{Kabluchko+Marynych:2024},
  convergence in probability with respect to the Gromov--Hausdorff
  metric implies convergence in probability (of equivalence classes)
  up to isometries in $\ell_2$. Here the separability assumption is
  essential for otherwise there might be no isometric realization of
  the generalized Wiener Spiral in $\ell_2$. The separability
  condition holds for all standard Borel probability spaces. For
  example, if $\XX$ is a complete separable metric space and $\salg$
  is its Borel $\sigma$-algebra, then $L_2(\XX,\mu)$ is separable for
  every Borel probability measure $\mu$.
\end{rem}

\section{Proof: Multiple sums}\label{sec:proof:-muptiple-sums}

Recall that in the setting of multiple sums the family $\M$ consists
of integer rectangles
$([0,k_1]\times \cdots \times [0,k_m])\cap \mathbb{Z}^m$. To simplify
the notation we shall use a bold font for vectors taking values in
$\R^m$ and use a multi-index ${\bf k}=(k_1,\ldots,k_m)$ instead of a
single index $j$ in~\eqref{eq:s_def}. Thus, we define a multi-index
random walk $(S_{\bf n})_{{\bf n}\in \N_0^m}$, where $\N_0:=\N\cup\{0\}$, by
\begin{equation}
  \label{eq:random_walk_def}
  S_{\bf n}^{(d)}:=\sum_{{\bf k}\leq {\bf n}}X^{(d)}({\bf k}),\quad
  {\bf n}\in\N_0^m,
\end{equation}
where $\leq$ is understood componentwise. The range of summation
${\bf k}\leq {\bf n}$ should more clearly be written as
${\bf 0}\leq {\bf k}\leq {\bf n}$, with ${\bf 0}=(0,\ldots,0)$ but
here and in what follows we omit the lower inequality to streamline the
notation. Observe also that with this convention
$S_{\bf 0}^{(d)}=X^{(d)}({\bf 0})$ rather than $S_{\bf
  0}^{(d)}=0$. Then, the set~\eqref{eq:points_random_walk} can also
be written as
\begin{equation}
  \label{eq:points_random_walk_alt}
  \MM_n^{(d)}(\M):=\Big\{S_{\bf k}^{(d)}\,:\,{\bf k}\leq n{\bf 1}\Big\},
\end{equation}
where ${\bf 1}=(1,1,\ldots,1)$. We shall first prove the following. 

\begin{theorem}\label{thm:theo2}
  Under assumption (a)-(d) the following holds true
  \begin{equation}\label{eq:2}
    \sup_{{\bf u}\in [0,1]^m}
    \Big|n^{-m}\|S_{\lfloor n{\bf u}\rfloor}^{(d)}\|_2^2
      -u_1 u_2 \cdots u_m \Big|~\pto~0\quad \text{as}\;d\to\infty.
  \end{equation}
\end{theorem}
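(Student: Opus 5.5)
Write $S_{\bf n}=S^{(d)}_{\bf n}$ and split the squared norm into a diagonal part and an off-diagonal part:
\begin{displaymath}
  \|S_{\bf n}\|_2^2=\sum_{{\bf k}\le{\bf n}}\|X({\bf k})\|_2^2+\sum_{{\bf k}\ne{\bf l},\,{\bf k},{\bf l}\le{\bf n}}\langle X({\bf k}),X({\bf l})\rangle=:D_{\bf n}+C_{\bf n}.
\end{displaymath}
We show separately that the diagonal part satisfies a uniform law of large numbers and that the off-diagonal part is uniformly negligible. Since $n^{-m}|\lfloor n{\bf u}\rfloor|\to u_1\cdots u_m$ uniformly, where $|{\bf n}|:=\prod_i(n_i+1)$, it suffices to prove
\begin{displaymath}
  n^{-m}\max_{{\bf n}\le n{\bf 1}}\big|D_{\bf n}-|{\bf n}|\big|\pto0
  \qquad\text{and}\qquad
  n^{-m}\max_{{\bf n}\le n{\bf 1}}|C_{\bf n}|\pto0.
\end{displaymath}

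\textbf{Diagonal part.} The summands $Y({\bf k})=\|X({\bf k})\|_2^2-1$ are i.i.d., centered and uniformly integrable in $d$ by (a) and (c). Truncate at level $c$: $Y=Y\1_{\{\|X\|^2\le c\}}-\E[\cdot]+(\text{remainder})$.
\begin{itemize}
\item The remainder has $L_1$ norm at most $2\sup_d\E\|X\|^2\1_{\{\|X\|^2>c\}}$, uniformly in ${\bf n}$, because bounding the maximum by the sum of absolute values over all $(n+1)^m$ indices costs only this $L_1$ norm after dividing by $n^m$.
\item The bounded part forms a multiparameter martingale in ${\bf n}$ with respect to the product filtration, which is commuting since the variables are independent. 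Cairoli's maximal inequality gives $\E\max_{\bf n}|\cdot|^2\le 4^m (n+1)^m c^2$, which is $o(n^{2m})$.
\end{itemize}
Letting $c\to\infty$ after $d\to\infty$ finishes the diagonal part.

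\textbf{Off-diagonal part; the main obstacle.} Here we need that the inner products $\langle X,X'\rangle$ of independent copies are small, and this is where (b) and (d) enter. Compute the variance under a truncation $X^c=X\1_{\{\|X\|^2\le c\}}$, after re-centering, which costs an $L_1$ error handled as before. For independent copies,
\begin{displaymath}
  \E\langle X,X'\rangle^2=\sum_{a,b}\big(\E X_aX_b\big)^2=\sum_a(\E X_a^2)^2\le\max_a\E X_a^2\to0
\end{displaymath}
by (b), (d) and $\E\|X\|^2=1$. After truncation, (b) is only approximately preserved. I would instead bound $\sum_{a,b}(\E X^c_aX^c_b)^2=\|\Sigma_c\|_{HS}^2\le\|\Sigma_c\|_{op}\operatorname{tr}\Sigma_c$, and control $\|\Sigma_c-\Sigma\|_{op}$ by the tail expectation $\E\|X\|^2\1_{\{\|X\|^2>c\}}$. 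So the truncated version tends to $0$ as well, up to terms that vanish as $c\to\infty$.

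Now $C_{\bf n}$ is a degenerate U-statistic indexed by rectangles. Order the pairs lexicographically, write $C_{\bf n}=2\sum_{{\bf k}\le{\bf n}}\langle X({\bf k}),\sum_{{\bf l}\prec{\bf k},\,{\bf l}\le{\bf n}}X({\bf l})\rangle$, and decompose according to which coordinates of ${\bf l}$ are smaller than those of ${\bf k}$. This yields finitely many ($3^m$ type) pieces, each a multiparameter martingale in ${\bf n}$ for the product filtration. Cairoli's inequality then gives
\begin{displaymath}
  \E\max_{\bf n}|C^c_{\bf n}|^2\le C_m\,\E|C^c_{n{\bf 1}}|^2\le C_m\,(n+1)^{2m}\,\E\langle X^c,X'^c\rangle^2=o(n^{2m}).
\end{displaymath}
Making this decomposition into genuine martingales precise, including the truncation and centering bookkeeping, is the step I expect to be hardest.

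If the martingale decomposition proves awkward, the alternative is to avoid maxima altogether. Prove pointwise convergence in probability on a finite grid of ${\bf u}$ by the variance bound above, and pass to the supremum via monotonicity of $D_{\bf n}$ together with the control of increments over small blocks, since $\|S_{\bf n}\|^2$ changes little between neighbouring grid points.
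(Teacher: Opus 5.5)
Your architecture matches the paper's. You split $\|S_{\bf n}\|_2^2$ into the diagonal sum and the off-diagonal sum $C_{\bf n}$, control $\max_{{\bf n}\le n{\bf 1}}|C_{\bf n}|$ by Cairoli's inequality, and use $\E\langle X,X'\rangle^2=\sum_a(\E X_a^2)^2\le\max_a\E X_a^2$ via (b) and (d).

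\textbf{The step that fails.} The truncation in the off-diagonal part does not work as written. You propose to absorb its error as ``an $L_1$ error handled as before''.
\begin{itemize}
\item In the diagonal part that device is legitimate, because there are only $(n+1)^m$ summands against the normalisation $n^{-m}$.
\item The error $C_{\bf n}-C^c_{\bf n}$ is a sum over $\asymp n^{2m}$ pairs. Bounding its maximum by the sum of absolute values leaves, after division by $n^m$, a factor of order $n^m$ in front of a quantity that depends on $c$ but not on $n$. Since $n=n(d)\to\infty$, this does not vanish as $d\to\infty$ for fixed $c$.
\item Grouping more cleverly does not rescue it. With $R$ the tail part, $\sum_{\bf k}\|R({\bf k})\|_2\,\|\sum_{{\bf l}\neq{\bf k}}X({\bf l})\|_2$ still leaves order $n^{m/2}\E\|R\|_2$ after normalisation.
\end{itemize}
The cross terms between truncated and tail parts would have to be treated by the same orthogonality and martingale argument, at which point the truncation gains nothing.

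\textbf{The fix.} The truncation is unnecessary. Under (a) alone, $\E\langle X,X'\rangle^2=\sum_{a,b}(\E X_aX_b)^2\le(\E\|X\|_2^2)^2=1$ is finite. So your variance identity applies directly to the untruncated vectors, which is exactly what the paper does.

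\textbf{Harmless differences.}
\begin{itemize}
\item Your decomposition into $3^m$ pieces is superfluous. $C_{\bf n}$ is itself a martingale for the product filtration $\sigma(X({\bf i}):{\bf i}\le{\bf m})$. For ${\bf k}\neq{\bf l}$, the conditional expectation of $\langle X({\bf k}),X({\bf l})\rangle$ is the term itself if ${\bf k},{\bf l}\le{\bf m}$ and zero otherwise. The paper states this as an orthomartingale property with respect to the coordinate filtrations and obtains $\E\max_{{\bf n}\le n{\bf 1}}|C_{\bf n}|^2\le 4^m\big((n+1)^{2m}-(n+1)^m\big)\sum_a(\E X_a^2)^2$ in one step.
\item For the diagonal part, your truncation argument is correct and gives the uniform law of large numbers directly. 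The paper instead takes the pointwise triangular-array law of large numbers (Lemma A.1 of Kabluchko and Marynych). It then upgrades this to uniform convergence by a multivariate P\'olya--Dini lemma, using that ${\bf u}\mapsto\sum_{{\bf i}\le\lfloor n{\bf u}\rfloor}\|X({\bf i})\|_2^2$ is coordinatewise monotone with a continuous limit. Your route is self-contained; the paper's is shorter given the cited lemma.
\end{itemize}
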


For the proof of Theorem~\ref{thm:theo2} observe that, for every
${\bf k}\in\mathbb{N}_0^m$,
$$
\|S_{{\bf k}}^{(d)}\|_2^2=\sum_{{\bf i}\leq {\bf k}}\|X^{(d)}({\bf i})\|_2^2+Q^{(d)}({\bf k}),
$$
where
$$
Q^{(d)}({\bf k}):=\sum_{{\bf i},{\bf j}\leq {\bf k},{\bf i}\neq {\bf
    j}}
\langle X^{(d)}({\bf i}),X^{(d)}({\bf j})\rangle,\quad {\bf k}\in\mathbb{N}_0^m,
$$
and $\langle\cdot,\cdot\rangle$ denotes the standard inner product in
$\R^d$. By Lemma~A.1 in~\cite{Kabluchko+Marynych:2024}, for every
fixed ${\bf u}\in [0,1]^m$,
\begin{displaymath}
  \frac{1}{n^m}\sum_{{\bf i}\leq \lfloor n {\bf u}\rfloor}\|X^{(d)}({\bf
    i})\|_2^2~\pto~u_1 u_2\cdots u_m
  \quad \text{as}\; d\to\infty.
\end{displaymath}
Moreover, this convergence is uniform by a multivariate P\'{o}lya’s
extension of Dini's theorem, see Lemma~\ref{lem:dini} in the
Appendix. Thus,
$$
\sup_{{\bf u}\in [0,1]^m}\left|\frac{1}{n^m}\sum_{{\bf i}\leq \lfloor
    n {\bf u}\rfloor}\|X^{(d)}({\bf i})\|_2^2-u_1 u_2 \cdots u_m
\right|~\pto~0\quad \text{as}\; d\to\infty.
$$
Hence, to prove Theorem~\ref{thm:theo2} it suffices to verify that
\begin{equation}
  \label{eq:3}
  \sup_{{\bf u}\in [0,1]^m}\frac{|Q^{(d)}(\lfloor n{\bf  u}\rfloor)|}{n^{m}}
  ~\pto~0\quad \text{as}\; d\to\infty.
\end{equation}
To this end, define the $\sigma$-algebras
\begin{displaymath}
  \mathscr{F}^{(d)}_{s}(k):=\sigma\big(X^{(d)}({\bf i})\,:
  {\bf i}=(i_1,\ldots,i_m)\in\mathbb{N}_0^m,\,i_s\leq k\big),
  \quad 1\leq s\leq m,\quad k\geq 0,
\end{displaymath}
and the filtrations
\begin{displaymath}
  \mathscr{F}^{(d)}_{s}:=\big\{
  \mathscr{F}^{(d)}_{s}(k),\;k\geq 0\big\},\quad 1\leq s\leq m.
\end{displaymath}
We shall show that the multiparameter process $Q^{(d)}$ is a
martingale in an appropriate sense. A notion of orthomartingales, see
Section~I.1.2 in~\cite{Khoshnevisan}, is sufficient for our purposes,
since it supplies all the necessary machinery like, for example, a
version of Doob's inequality.

\begin{lemma}
  Under the assumption (a) the process
  $(Q^{(d)}({\bf k}))_{{\bf k}\in\N_0^m}$ is an
  orthomartingale with respect to the filtrations
  $\mathscr{F}^{(d)}_{1},\ldots,\mathscr{F}^{(d)}_{m}$.
\end{lemma}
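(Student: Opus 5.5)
We verify the definition of an orthomartingale from Section~I.1.2 of \cite{Khoshnevisan}. For each fixed direction $s\in\{1,\dots,m\}$ and all values of the remaining coordinates, the one-parameter process $k\mapsto Q^{(d)}({\bf k})$, with $k=k_s$, must be a martingale with respect to $\mathscr{F}^{(d)}_s$. This requires three things: adaptedness, integrability, and the martingale property in the $s$-th coordinate.

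Adaptedness and integrability are immediate. $Q^{(d)}({\bf k})$ is a finite sum of inner products of vectors $X^{(d)}({\bf i})$ with ${\bf i}\leq{\bf k}$, hence $i_s\leq k_s$. Each term is integrable by Cauchy--Schwarz and (a), since $\E|\langle X,Y\rangle|\leq \E\|X\|_2\,\E\|Y\|_2\leq 1$ for independent $X,Y$.

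For the martingale property, fix $s$ and ${\bf k}$, and let ${\bf k}'={\bf k}+{\bf e}_s$. We decompose
\begin{displaymath}
  Q^{(d)}({\bf k}')-Q^{(d)}({\bf k})
  =2\sum_{{\bf i}\leq{\bf k}}\sum_{{\bf j}\in L}\langle X^{(d)}({\bf i}),X^{(d)}({\bf j})\rangle
  +\sum_{{\bf i},{\bf j}\in L,\,{\bf i}\neq{\bf j}}\langle X^{(d)}({\bf i}),X^{(d)}({\bf j})\rangle,
\end{displaymath}
where $L=\{{\bf j}\leq{\bf k}':\, j_s=k_s+1\}$ is the new layer. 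Every $X^{(d)}({\bf j})$ with ${\bf j}\in L$ is independent of $\mathscr{F}^{(d)}_s(k_s)$, because the vectors are independent and $\mathscr{F}^{(d)}_s(k_s)$ is generated by those with index $j_s\leq k_s$. In the first sum, $X^{(d)}({\bf i})$ is $\mathscr{F}^{(d)}_s(k_s)$-measurable, so conditioning gives $\langle X^{(d)}({\bf i}),\E X^{(d)}({\bf j})\rangle=0$ by (a). Writing the inner product coordinatewise avoids any measurability subtlety here. In the second sum, both vectors are independent of the conditioning $\sigma$-algebra and of each other, so the conditional expectation equals the unconditional one, $\langle \E X^{(d)}({\bf i}),\E X^{(d)}({\bf j})\rangle=0$.

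Hence $\E[Q^{(d)}({\bf k}')\mid \mathscr{F}^{(d)}_s(k_s)]=Q^{(d)}({\bf k})$. Iterating this in $k_s$ gives the martingale property in direction $s$. There is no real obstacle: the only care needed is the bookkeeping of the new layer $L$ and the observation that the conditioning concerns only the $s$-th coordinate, with the remaining coordinates free. Commutation of the filtrations is not required for this definition, but it holds anyway by independence if needed later for Cairoli's inequality.
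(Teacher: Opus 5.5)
Your proof is correct and takes the same route as the paper: adaptedness follows from the constraint $i_s\le k_s$. You then split off the new layer in direction $s$, where the cross terms vanish by measurability plus centring and the within-layer terms vanish by independence. Your explicit integrability check and folding the two symmetric cross sums into a factor $2$ are minor additions that do not change the argument.
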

\begin{proof}
  Fix $1\leq s\leq m$ and denote by ${\bf e}_s$ the corresponding basis
  vector in $\R^m$. For every fixed
  $k_1,\ldots,k_{s-1},k_{s+1},\ldots,k_m\in\N_0$ and $k\in\N_0$, the
  representation
  \begin{multline*}
    Q^{(d)}(k_1,\ldots,k_{s-1},k,k_{s+1},\ldots,k_m)\\
    =\sum_{i_1,j_1=0}^{k_1}\cdots\sum_{i_{s-1},j_{s-1}=0}^{k_{s-1}}
    \sum_{i_{s},j_{s}=0}^{k}\sum_{i_{s+1},j_{s+1}=0}^{k_{s+1}}\cdots
    \sum_{i_{m},j_{m}=0}^{k_{m}}
    \langle X^{(d)}({\bf i}),X^{(d)}({\bf j})\rangle\1_{{\bf i}\neq {\bf j}}
  \end{multline*}
  demonstrates that $Q^{(d)}(k_1,\ldots,k_{s-1},k,k_{s+1},\ldots,k_m)$ is
  $\mathscr{F}^{(d)}_{s}(k)$-measurable. Observe that the above
  representation implies that
  \begin{align*}
    \E[&Q^{(d)}(k_1,\ldots,k_{s-1},k,k_{s+1},\ldots,k_m)|\mathscr{F}^{(d)}_{s}(k-1)]\\
    &=Q^{(d)}(k_1,\ldots,k_{s-1},k-1,k_{s+1},\ldots,k_m)\\
    &+\sum_{j_1=0}^{k_1}\cdots\sum_{j_{s-1}=0}^{k_{s-1}}\sum_{j_{s+1}=0}^{k_{s+1}}\cdots\sum_{j_{m}=0}^{k_{m}}
      \E[\langle S^{(d)}_{{\bf k}-{\bf e}_s},X^{(d)}({\bf j})\rangle\1_{j_s=k}|\mathscr{F}^{(d)}_{s}(k-1)]\\
    &+\sum_{i_1=0}^{k_1}\cdots\sum_{i_{s-1}=0}^{k_{s-1}}\sum_{i_{s+1}=0}^{k_{s+1}}\cdots\sum_{i_{m}=0}^{k_{m}}
      \E[\langle X^{(d)}({\bf i}),S^{(d)}_{{\bf k}-{\bf e}_s}\rangle\1_{i_s=k}|\mathscr{F}^{(d)}_{s}(k-1)]\\
    &+\sum_{i_1,j_1=0}^{k_1}\cdots\sum_{i_{s-1},j_{s-1}=0}^{k_{s-1}}\sum_{i_{s+1},j_{s+1}=0}^{k_{s+1}}\cdots\sum_{i_{m},j_{m}=0}^{k_{m}}
      \E[\langle X^{(d)}({\bf i}),X^{(d)}({\bf j})\rangle\1_{{\bf i}\neq {\bf j},i_s=j_s=k}|\mathscr{F}^{(d)}_{s}(k-1)].
  \end{align*}
  Since the random variable $S^{(d)}_{{\bf k}-{\bf e}_s}$ is
  $\mathscr{F}^{(d)}_{s}(k-1)$-measurable, and $X^{(d)}({\bf j})$ is
  centred and independent of $\mathscr{F}^{(d)}_{s}(k-1)$ if $j_s=k$,
  $\E[\langle S^{(d)}_{{\bf k}-{\bf e}_s}, X^{(d)}({\bf j})\rangle
    \1_{j_s=k}|\mathscr{F}^{(d)}_{s}(k-1)]=0$.
  For the same reason, $
    \E[\langle X^{(d)}({\bf i}),S^{(d)}_{{\bf k}-{\bf e}_s}\rangle
    \1_{i_s=k}|\mathscr{F}^{(d)}_{s}(k-1)]=0$.
  It remains to show that the fourth expectation vanishes. Since
  $i_s=j_s=k$, both $X^{(d)}({\bf i})$ and $X^{(d)}({\bf j})$ are
  independent of $\mathscr{F}^{(d)}_{s}(k-1)$. Moreover, since
  ${\bf i}\neq {\bf j}$, they are also mutually independent. Hence
  \begin{displaymath}
    \E[\langle X^{(d)}({\bf i}),X^{(d)}({\bf j})\rangle
    \1_{{\bf i}\neq {\bf j},i_s=j_s=k}|\mathscr{F}^{(d)}_{s}(k-1)]
    =\E[\langle X^{(d)}({\bf i}),X^{(d)}({\bf j})\rangle
    \1_{{\bf i}\neq {\bf j},i_s=j_s=k}]=0.
  \end{displaymath}
  The proof is complete.
\end{proof}

By Lemma 2.1.1 in~\cite{Khoshnevisan},
$(|Q^{(d)}({\bf k})|)_{{\bf k}\in\mathbb{N}_0^m}$ is an
orthosubmartingale. Using Cairoli’s Strong Inequality, a version of
Doob's inequality for orthosubmartingales, see Theorem~2.3.1
in~\cite{Khoshnevisan}, and Markov's inequality we conclude that, for
every fixed $\eps>0$,
\begin{displaymath}
  \Prob{\sup_{{\bf u}\in [0,1]^m}\big|Q^{(d)}(\lfloor n{\bf u}\rfloor )\big|
    \geq n^m \eps}
  \leq \frac{1}{(\eps n^m)^2}
  \E\sup_{{\bf u}\in [0,1]^m}|Q^{(d)}
  (\lfloor n{\bf u}\rfloor )|^2
  \leq \frac{2^{2m}}{(\eps n^m)^2}\E|Q^{(d)}(n{\bf 1})|^2.
\end{displaymath}
Thus, for the proof of~\eqref{eq:3} we need to check that
\begin{equation}\label{eq:4}
  \lim_{d\to\infty} n^{-2m}\E[|Q^{(d)}(n{\bf 1})|^2]=0
\end{equation}
We have
\begin{align*}
  \E|&Q^{(d)}(n{\bf 1})|^2
  =\E\,\Bigg(\sum_{i_1,j_1=0}^{n}
    \cdots\sum_{i_m,j_m=0}^{n}\langle X^{(d)}
    ({\bf i}),X^{(d)}({\bf j})\rangle\1_{{\bf i}\neq {\bf j}}
    \Bigg)^2\\
  &=\E\sum_{i_1,j_1=0}^{n}\cdots\sum_{i_m,j_m=0}^{n}
    \sum_{k_1,l_1=0}^{n}\cdots\sum_{k_m,l_m=0}^{n}
    \langle X^{(d)}({\bf i}),X^{(d)}({\bf j})\rangle
    \langle X^{(d)}({\bf k}),X^{(d)}({\bf l})\rangle
    \1_{{\bf i}\neq {\bf j},{\bf k}\neq {\bf l}}\\
  &=\sum_{i_1,j_1=0}^{n}\cdots\sum_{i_m,j_m=0}^{n}
    \sum_{k_1,l_1=0}^{n}\cdots\sum_{k_m,l_m=0}^{n}
    \sum_{a=1}^{d}\sum_{b=1}^{d}
    \E[X_a^{(d)}({\bf i})X_a^{(d)}({\bf j})
    X_b^{(d)}({\bf k})X_b^{(d)}({\bf l})]\1_{{\bf i}
    \neq {\bf j},{\bf k}\neq {\bf l}}.
\end{align*}
By independence, the expectation
$\E[X_a^{(d)}({\bf i})X_a^{(d)}({\bf j})X_b^{(d)}({\bf
  k})X_b^{(d)}({\bf l})]$ vanishes if
$\{{\bf k},{\bf l}\}\neq \{{\bf i},{\bf j}\}$. If
$\{{\bf k},{\bf l}\}= \{{\bf i},{\bf j}\}$ and $a\neq b$, the
expectation
$\E[X_a^{(d)}({\bf i})X_a^{(d)}({\bf j})X_b^{(d)}({\bf
  k})X_b^{(d)}({\bf l})]$ vanishes in view of our assumption
(b). Thus,
\begin{multline*}
  \E|Q^{(d)}(n{\bf 1})|^2=\sum_{a=1}^{d}\sum_{i_1,j_1=0}^{n}\cdots
  \sum_{i_m,j_m=0}^{n}\E (X_a^{(d)}({\bf i}))^2
  \E(X_a^{(d)}({\bf j}))^2\1_{{\bf i}\neq {\bf j}}\\
  =((n+1)^{2m}-(n+1)^{m})\sum_{a=1}^{d}\E(X_a^{(d)})^2
  \E(X_a^{(d)})^2.
\end{multline*}
It remains to note that
\begin{equation}
  \label{eq:sum-max}
  \sum_{a=1}^{d}\E(X_a^{(d)})^2\E(X_a^{(d)})^2
  \leq \max_{a\in\{1,\ldots,d\}}\E(X_a^{(d)})^2 \sum_{a=1}^{d}\E(X_a^{(d)})^2
  =\max_{a\in\{1,\ldots,d\}}\E(X_a^{(d)})^2
\end{equation}
converges to zero as $d\to\infty$ by assumption (d). This completes
the proof of~\eqref{eq:4} and of Theorem~\ref{thm:theo2}.

With Theorem~\ref{thm:theo2} at hand, we are in position to prove
Theorem~\ref{theo:GH_conv_to_wiener_spiral}.  By Corollary~7.3.28 on
page~258 of~\cite{MR1835418}, it suffices to verify that
\begin{equation}\label{eq:1}
  \sup_{{\bf u},{\bf v}\in [0,1]^m}
  \left|\frac{\|S_{\lfloor n{\bf u}\rfloor}^{(d)}
      -S_{\lfloor n{\bf v}\rfloor}^{(d)}\|_2}{n^{m/2}}
    -\rho_{\mathbb{WS}_m}({\bf u},{\bf v})\right|~\pto~0,\quad d\to\infty,
\end{equation}
where the metric on the $m$-variate Wiener spiral is given by
\begin{displaymath}
  \rho_{\mathbb{WS}_m}({\bf u},{\bf v})
  =\big(|{\bf u}|+|{\bf v}|-2|{\bf u}\wedge {\bf v}|\big)^{1/2},
\end{displaymath}
with $|{\bf u}|=u_1\cdots u_m$ and ${\bf u\wedge v}$ denoting the
componentwise minimum. First, we prove that
\begin{equation}
  \label{eq:mult_sums_step1_proof1}
  n^{-m}\|S_{\lfloor n{\bf u}\rfloor}^{(d)}
  -S_{\lfloor n{\bf v}\rfloor}^{(d)}\|_2^2\pto
  |{\bf u}|+|{\bf v}|-2\big|{\bf u\wedge v}\big|
  \quad \text{as}\;d\to\infty
\end{equation}
for each given ${\bf u}\in [0,1]^m$ and ${\bf v}\in [0,1]^m$.  Indeed,
\begin{align*}
  \|S_{\lfloor n{\bf u}\rfloor}^{(d)}-S_{\lfloor n{\bf v}\rfloor}^{(d)}\|_2^2
  &=\Big\|\sum_{\substack{{\bf i}\leq\lfloor n{\bf u}\rfloor\\
    {\bf i}\not\leq \lfloor n{\bf v}\rfloor}}X^{(d)}({\bf i})
    -\sum_{\substack{{\bf i}\leq\lfloor n{\bf v}\rfloor\\
    {\bf i}\not\leq \lfloor n{\bf u}\rfloor}}X^{(d)}({\bf i})\Big\|_2^2\\
  &=\sum_{\substack{{\bf i}\leq\lfloor n{\bf u}\rfloor\\
    {\bf i}\not\leq \lfloor n{\bf v}\rfloor}}\|X^{(d)}({\bf i})\|_2^2
    +\sum_{\substack{{\bf i}\leq\lfloor n{\bf v}\rfloor\\
    {\bf i}\not\leq \lfloor n{\bf u}\rfloor}}\|X^{(d)}({\bf i})\|_2^2-2\sum_{\substack{{\bf i}\leq\lfloor n{\bf u}\rfloor\\
    {\bf i}\not\leq \lfloor n{\bf v}\rfloor}}
    \sum_{\substack{{\bf j}\leq\lfloor n{\bf v}\rfloor\\
    {\bf j}\not\leq \lfloor n{\bf u}\rfloor}}\langle X^{(d)}({\bf i}),
    X^{(d)}({\bf j})\rangle\\
  &\quad+\sum_{\substack{{\bf i}, {\bf j}\leq\lfloor n{\bf u}\rfloor\\
    {\bf i}, {\bf j}\not\leq \lfloor n{\bf v}\rfloor, {\bf i}\neq
    {\bf j}}}\langle X^{(d)}({\bf i}),
    X^{(d)}({\bf j})\rangle\;\;
    +\sum_{\substack{{\bf i}, {\bf j}\leq\lfloor n{\bf v}\rfloor\\
    {\bf i}, {\bf j}\not\leq \lfloor n{\bf u}\rfloor,{\bf i}\neq {\bf j}}}\langle X^{(d)}({\bf i}),
    X^{(d)}({\bf j})\rangle.
\end{align*}
Normalised by $n^{-m}$, the sum of the first two summands on the
right-hand side converges to
$|{\bf u}|+|{\bf v}|-2\big|{\bf u\wedge v}\big|$ in probability by the
weak law of large numbers, see Lemma~A.1
in~\cite{Kabluchko+Marynych:2024}. After normalisation by $n^{-m}$,
each of the last three summands converges to $0$ in
probability. Indeed,
\begin{multline*}
  \Prob{\bigg|\sum_{\substack{{\bf i}\leq\lfloor n{\bf u} \rfloor\\
    {\bf i}\not\leq \lfloor n{\bf v}\rfloor}}
    \sum_{\substack{{\bf j}\leq\lfloor n{\bf v}\rfloor\\
    {\bf j}\not\leq \lfloor n{\bf u}\rfloor}}\langle X^{(d)}({\bf i}),
    X^{(d)}({\bf j})\rangle\bigg|\geq \eps
    (n+1)^m}\\
    \leq \frac{1}{\eps^{2}(n+1)^{2m}}
    \E\bigg(\sum_{\substack{{\bf i}\leq\lfloor n{\bf u} \rfloor\\
    {\bf i}\not\leq \lfloor n{\bf v}\rfloor}}
    \sum_{\substack{{\bf j}\leq\lfloor n{\bf v} \rfloor\\
    {\bf j}\not\leq \lfloor n{\bf u}\rfloor}}\langle X^{(d)}({\bf i}),
    X^{(d)}({\bf j})\rangle\bigg)^2
    \leq \frac{1}{\eps^{2}}\sum_{a=1}^{d}\E(X_a^{(d)})^2\E(X_a^{(d)})^2,
\end{multline*}
which converges to zero by \eqref{eq:sum-max}. The same argument
applies to the last two terms. 

Fix $l\in\N$. We approximate $S _{\lfloor n{\bf u}\rfloor}^{(d)}$ with sums taken on
the grid with step size $1/l$. Then
\begin{displaymath}
  \left|n^{-m/2}\|S_{\lfloor n{\bf u}\rfloor}^{(d)}
    -S_{\lfloor n{\bf v}\rfloor}^{(d)}\|_2
    -\rho_{\mathbb{WS}_m}({\bf u},{\bf v})\right|
  \leq I_1+I_2+I_3+I_4,
\end{displaymath}
where
\begin{displaymath}
  I_1:=n^{-m/2}\|S_{\lfloor n{\bf u}\rfloor}^{(d)}
    -S_{\lfloor n{\bf k_u}/l\rfloor}^{(d)}\|_2,\quad   I_2:=n^{-m/2}\|S_{\lfloor n{\bf v}\rfloor}^{(d)}
    -S_{\lfloor n{\bf k_v}/l\rfloor}^{(d)}\|_2
\end{displaymath}
with ${\bf k_u},{\bf k_v}\in\{0,\dots,l-1\}^m$ such that
${\bf k_u}/l\leq {\bf u}\leq ({\bf k_u}+{\bf 1})/l$ and ${\bf k_v}/l\leq {\bf v}\leq ({\bf k_v}+{\bf 1})/l$,
\begin{displaymath}
  I_3:=\left|\frac{\|S_{\lfloor n{\bf k_u}/l\rfloor}^{(d)}
    -S_{\lfloor n{\bf k_v/l}\rfloor}^{(d)}\|_2}{n^{m/2}}
  -\rho_{\mathbb{WS}_m}\left(\frac{{\bf k_u}}{l},\frac{{\bf k_v}}{l}\right)\right|,\,\,\,   I_4:=\left|\rho_{\mathbb{WS}_m}\left(\frac{{\bf k_u}}{l},\frac{{\bf k_v}}{l}\right)
    -\rho_{\mathbb{WS}_m}({\bf u},{\bf v})\right|.
\end{displaymath}
Since the family of all ${\bf k_u}$ and ${\bf k_v}$ is finite, by
\eqref{eq:mult_sums_step1_proof1} and the union bound, the supremum of
$I_3$ over ${\bf u}$, ${\bf v}$ tends to zero in
probability, as $d\to\infty$, for every fixed $l$. Furthermore,
\begin{align*}
  I_4^2&\leq \left|\rho^2_{\mathbb{WS}_m}({\bf k_u}/l,{\bf k_v}/l)
    -\rho^2_{\mathbb{WS}_m}({\bf u},{\bf v})\right|\\
  &=\big|\; |{\bf u}|+|{\bf v}|-2|{\bf u}\wedge{\bf v}|
    -|{\bf k_u}|/l^m-|{\bf k_v}|/l^m
    +2|{\bf k_u}\wedge{\bf k_v}|/l^m\big|\\
  &\leq \big|\; |{\bf u}|-|{\bf k_u}|/l^m|
  +||{\bf v}|-|{\bf k_v}|/l^m|+2||{\bf u}\wedge{\bf v}|
  -|{\bf k_u}\wedge{\bf k_v}|/l^m\big|
  \leq 4((1+l^{-1})^m-1),
\end{align*}
which can be made aritraily small by the choice of $l$.  

It remains to consider $I_1$, the analysis of $I_2$ being the same. 
For ${\bf i}\in \{1,\ldots,l\}^m$, put
\begin{displaymath}
  \xi_{{\bf i},n,l}^{(d)}
  :=\sup_{n({\bf i}-{\bf 1})/l\leq {\bf k}\leq n{\bf i}/l}
  \bigg\|\sum_{n({\bf i}-{\bf 1})/l\leq {\bf j}\leq {\bf k}}X^{(d)}({\bf j})\bigg\|_2
\end{displaymath}
and observe that they are all distributed as
\begin{displaymath}
  \xi_{n,l}^{(d)}
  :=\sup_{{\bf z}\in[0,1/l]^m} \|S^{(d)}_{\lfloor n{\bf z}\rfloor}\|_2.
\end{displaymath}
By the triangle inequality,
\begin{multline*}
  I_1\leq 
  \sup_{{\bf k_u}/l\leq {\bf u}\leq ({\bf  k_u+1})/l}
  n^{-m/2}\|S_{\lfloor n{\bf u}\rfloor}^{(d)}
  -S_{\lfloor n{\bf k}_{\bf u}/l\rfloor}^{(d)}\|_2\\
  \le n^{-m/2}\sum_{{\bf k}_{{\bf u}}\not\leq {\bf i},{\bf i}\leq {\bf k}_{\bf u}+{\bf 1}}\xi_{{\bf i},n,l}^{(d)}
  \leq n^{-m/2}\max_{{\bf j}\in \{0,1,\ldots,l-1\}^m}\sum_{{\bf j}\not\leq {\bf i},{\bf i}\leq {\bf j}+{\bf 1}}\xi_{{\bf i},n,l}^{(d)}.
\end{multline*}
Note that each sum on the right-hand side contains at most $l^m-(l-1)^m\leq ml^{m-1}$ summands. For every $\eps>0$, by the union bound
\begin{align*}
  &\Prob{n^{-m/2} \max_{{\bf j}\in \{0,1,\ldots,l-1\}^m}\sum_{{\bf j}\not\leq {\bf i},{\bf i}\leq {\bf j}+{\bf 1}}
  \xi_{{\bf i},n,l}^{(d)}\geq \eps}\\
   &\leq 
     \Prob{\exists\; 
     {\bf j},{\bf i}\text{ such that }{\bf j}\in\{0,\ldots,l-1\}^m,{\bf j}\not\leq {\bf i}\leq {\bf j}+{\bf 1}\text{ and }
     n^{-m/2}\xi_{{\bf i},n,l}^{(d)}\geq\eps/(ml^{m-1})}\\
     &\leq (l^m)(ml^{m-1}) \Prob{n^{-m/2}\sup_{{\bf z}\in[0,1/l]^m} \|S^{(d)}_{\lfloor n{\bf z}\rfloor}\|_2\geq \eps/(ml^{m-1})}\\
     &\leq ml^{2m-1} \Prob{\sup_{{\bf z}\in[0,1/l]^m}\left|n^{-m/2} \|S^{(d)}_{\lfloor n{\bf z}\rfloor}\|_2-|{\bf z}|\right|\geq \eps/(2ml^{m-1})}+ml^{2m-1} \1_{l^{-m}\geq \eps/(2ml^{m-1})}.   
\end{align*}
By Theorem~\ref{thm:theo2} the first summand converges to $0$
as $d\to\infty$. The second summand becomes zero if $l$ is
sufficiently large. The proof is complete.

\section{Proof: Set-indexed sums}
\label{sec:proof:-set-indexed}

Recall that $(X^{(d)}(i),U_i)_{i\geq 1}$ are independent copies of a
pair $(X^{(d)},U)$, where $X^{(d)}$ and $U$ are independent and $U$
has distribution $\mu$. In the following denote $Z:=(X^{(d)},U)$ and
$Z_i:=(X^{(d)}(i),U_i)$, $i \geq 1$. Also recall that in the (SI)
setting $\mathcal{A}=\mathcal{M}$ is assumed to be a VC class of
subsets in $\XX$ and our aim is to prove convergence of
$\MM_n^{(d)}(\M)$ to the Wiener spiral on $\mathcal{M}$ with the
governing measure $\mu$.

For $A\in\mathfrak{F}$ and $i\geq 1$, denote $Y_i^{(d)}(A):=X^{(d)}(i)\1_{U_i\in A}$, so that~\eqref{eq:s_def} takes the form
\begin{displaymath}
  S^{(d)}_n(A):=\sum_{i=1}^{n} Y^{(d)}_i(A),\quad n\in\mathbb{N}.
\end{displaymath}

\begin{lemma}\label{lm:set_1}
  In the {\bf (SI)} setting suppose that (a)--(d) hold. Then, for every
  $A\in\salg$,
  \begin{displaymath}
    n^{-1/2}\|S_n^{(d)}(A)\|_2 \pto \mu(A)^{1/2}
    \quad\text{as}\  d\to\infty.
  \end{displaymath}
\end{lemma}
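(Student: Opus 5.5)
It suffices to prove that $n^{-1}\|S_n^{(d)}(A)\|_2^2\pto\mu(A)$; the claim then follows from the continuity of the square root. As in the proof of Theorem~\ref{thm:theo2}, we split the squared norm into a diagonal part and an off-diagonal part:
\begin{displaymath}
  \|S_n^{(d)}(A)\|_2^2=\sum_{i=1}^{n}\|X^{(d)}(i)\|_2^2\1_{U_i\in A}
  +\sum_{i\neq j}\langle X^{(d)}(i),X^{(d)}(j)\rangle\1_{U_i\in A}\1_{U_j\in A}.
\end{displaymath}

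\textbf{Diagonal term.} The summands $W_i:=\|X^{(d)}(i)\|_2^2\1_{U_i\in A}$ form a triangular array of i.i.d.\ nonnegative variables with mean $\E\|X^{(d)}\|_2^2\,\mu(A)=\mu(A)$, by (a) and the independence of $X^{(d)}$ and $U$. Since $W_i\le\|X^{(d)}(i)\|_2^2$, the family $(W_1)_{d}$ inherits uniform integrability from (c). A weak law of large numbers for uniformly integrable triangular arrays therefore gives $n^{-1}\sum_{i\le n}W_i\pto\mu(A)$. This is exactly the setting of Lemma~A.1 in~\cite{Kabluchko+Marynych:2024}, applied to the vectors $X^{(d)}(i)\1_{U_i\in A}$. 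If a direct argument is preferred, one truncates at level $c$, applies Chebyshev's inequality to the truncated part, and bounds the remainder in $L_1$ uniformly in $d$ using (c).

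\textbf{Off-diagonal term.} Call this term $Q$. Expand $\E Q^2$ as a double sum over pairs $(i,j)$ and $(k,l)$ with $i\neq j$ and $k\neq l$. Because the vectors are independent and centred, and independent of the $U$'s, every term with $\{k,l\}\neq\{i,j\}$ vanishes. Expanding the inner products coordinatewise, the cross terms with $a\neq b$ vanish by (b). This leaves
\begin{displaymath}
  \E Q^2=2\sum_{i\neq j}\mu(A)^2\sum_{a=1}^{d}\big(\E(X_a^{(d)})^2\big)^2
  \leq 2n^2\max_{a\in\{1,\ldots,d\}}\E(X_a^{(d)})^2,
\end{displaymath}
where the inequality uses \eqref{eq:sum-max}. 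Hence $n^{-2}\E Q^2\to0$ by (d), and Chebyshev's inequality gives $n^{-1}Q\pto0$.

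\textbf{Conclusion and main obstacle.} Combining the two parts yields the claim. There is no real obstacle: the computation is the same as for $m=1$ in Section~\ref{sec:proof:-muptiple-sums}, with the indicators only contributing factors of $\mu(A)$. The one point needing care is the law of large numbers for the diagonal term. Since the distribution of the summands changes with $d$, we must rely on the uniform integrability in (c) rather than on the classical law of large numbers.
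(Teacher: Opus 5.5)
Your proof is correct and follows the paper's own argument. The paper uses the same split of $\|S_n^{(d)}(A)\|_2^2$ into diagonal and off-diagonal parts, treats the diagonal part by the triangular-array weak law of large numbers (Lemma~A.1 of Kabluchko--Marynych), and treats the off-diagonal part by a Chebyshev second-moment bound using (b) and (d). Your computation of $\mathbb{E}Q^2$ also keeps the factor $2$ coming from the pairs $(k,l)=(j,i)$; the paper's displayed bound omits this factor, which does no harm.
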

\begin{proof}
  It suffices to show that $n^{-1}\|S_n^{(d)}(A)\|_2^2\pto \mu(A)$ as $d\to\infty$. Note that
  \begin{displaymath}
    n^{-1}\|S_n^{(d)}(A)\|_2^2
    =n^{-1}\sum_{i=1}^{n}\|Y_i^{(d)}(A)\|_2^2
    +n^{-1}\sum_{1\leq i\neq j\leq n}\langle Y_i^{(d)}(A),Y_j^{(d)}(A)\rangle.
  \end{displaymath}
  The first term converges to $\mu(A)$ in probability by the weak law
  of large numbers in the triangular scheme, see
  Lemma~A.1 in~\cite{Kabluchko+Marynych:2024}. By Markov's inequality,
  \begin{align*}
    \Prob{\left|\sum_{1\leq i\neq j\leq n}\langle
    Y_i^{(d)}(A), Y_j^{(d)}(A)\rangle\right|\geq n\eps}
    &\leq (n\eps)^{-2}\E\bigg[\sum_{1\leq i\neq j\leq n}\langle
    Y_i^{(d)}(A), Y_j^{(d)}(A)\rangle\bigg]^2\\
    &\leq \eps^{-2}\max_{1\leq a\leq
      d}\E(X_a^{(d)})^2\big(\mu(A)\big)^2\to 0\quad\text{as}\ d\to\infty,
  \end{align*}
  so that the second term converges to zero in probability, and the proof
  is complete.
\end{proof}

\begin{corollary}\label{cor:on-dim_conv_SI_setting}
  In the {\bf (SI)} setting suppose that (a)--(d) hold. Then, for every
  $A,B\in\salg$,
  \begin{displaymath}
    n^{-1/2}\|S_n^{(d)}(A)-S_n^{(d)}(B)\|_2 \pto \mu(A\triangle B)^{1/2}
    \quad\text{as}\  d\to\infty.
  \end{displaymath}
\end{corollary}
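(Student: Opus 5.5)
The plan is to reduce the statement to Lemma~\ref{lm:set_1} applied to a suitably signed sum. Observe that
\begin{displaymath}
  S_n^{(d)}(A)-S_n^{(d)}(B)=\sum_{i=1}^{n}X^{(d)}(i)\big(\1_{U_i\in A}-\1_{U_i\in B}\big)
  =\sum_{i=1}^{n}X^{(d)}(i)\,\varepsilon_i\1_{U_i\in A\triangle B},
\end{displaymath}
where $\varepsilon_i:=\1_{U_i\in A\setminus B}-\1_{U_i\in B\setminus A}\in\{-1,1\}$ on the event $U_i\in A\triangle B$. Thus the difference is a set-indexed sum over the single set $A\triangle B$, except that each increment carries a sign depending on $U_i$.

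First I would repeat the proof of Lemma~\ref{lm:set_1} verbatim with $Y_i^{(d)}(A)$ replaced by
\begin{displaymath}
  \tilde Y_i^{(d)}:=\varepsilon_iX^{(d)}(i)\1_{U_i\in A\triangle B}.
\end{displaymath}
The diagonal part satisfies $\|\tilde Y_i^{(d)}\|_2^2=\|X^{(d)}(i)\|_2^2\1_{U_i\in A\triangle B}$, because $\varepsilon_i^2=1$ on the relevant event. Hence the weak law of large numbers in the triangular scheme (Lemma~A.1 in~\cite{Kabluchko+Marynych:2024}) gives
\begin{displaymath}
  n^{-1}\sum_{i=1}^n\|\tilde Y_i^{(d)}\|_2^2\pto\mu(A\triangle B),
\end{displaymath}
exactly as in the lemma. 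Here I use independence of $X^{(d)}(i)$ and $U_i$, together with the uniform integrability (c), which persists after multiplying by a bounded indicator.

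For the off-diagonal part I would bound
\begin{displaymath}
  \E\Big[\sum_{i\neq j}\langle\tilde Y_i^{(d)},\tilde Y_j^{(d)}\rangle\Big]^2 .
\end{displaymath}
Since $X^{(d)}(i)$ is centred and independent of $U_i$ and of all other indices, the vectors $\tilde Y_i^{(d)}$ are i.i.d.\ and centred. Expanding the square, only the pairings $\{k,l\}=\{i,j\}$ survive. Assumption (b) then kills the cross-coordinate terms $a\neq b$. The result is
\begin{displaymath}
  2n(n-1)\sum_{a=1}^d\big(\E(X_a^{(d)})^2\big)^2\mu(A\triangle B)^2
  \le 2n^2\max_{a}\E(X_a^{(d)})^2,
\end{displaymath}
using \eqref{eq:sum-max}. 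By Markov's inequality and (d), this off-diagonal term divided by $n$ tends to zero in probability.

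Combining both parts yields $n^{-1}\|S_n^{(d)}(A)-S_n^{(d)}(B)\|_2^2\pto\mu(A\triangle B)$, and continuity of the square root finishes the argument. The only point needing care is checking that the random signs $\varepsilon_i$ do not destroy centring or the covariance computation. They do not, because $\varepsilon_i$ is a function of $U_i$ alone, which is independent of $X^{(d)}(i)$. I do not anticipate a genuine obstacle here.
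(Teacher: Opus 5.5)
Your argument is correct and is essentially what the paper intends: the corollary is stated without proof as an immediate consequence of Lemma~\ref{lm:set_1}, and rerunning that lemma's diagonal/off-diagonal argument with the signed increments $\varepsilon_i X^{(d)}(i)\1_{U_i\in A\triangle B}$ fills it in (your factor $2n(n-1)$, counting both pairings $(k,l)=(i,j)$ and $(k,l)=(j,i)$, is the accurate constant). If you prefer to use Lemma~\ref{lm:set_1} purely as a black box, put $u=S_n^{(d)}(A\setminus B)$ and $v=S_n^{(d)}(B\setminus A)$, so that $S_n^{(d)}(A)-S_n^{(d)}(B)=u-v$ and $S_n^{(d)}(A\triangle B)=u+v$. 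The parallelogram identity $\|u-v\|_2^2=2\|u\|_2^2+2\|v\|_2^2-\|u+v\|_2^2$ then yields the limit $2\mu(A\setminus B)+2\mu(B\setminus A)-\mu(A\triangle B)=\mu(A\triangle B)$ without any new moment computation.
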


In order to proceed we need to recall some notions from the
Vapnik–\v{C}hervonenkis theory. The \emph{bracketing number}
$N_{[]}(\eps,\M,\mu)$ of $\M$ in $(\XX,\mu)$ is the smallest number of
measurable sets $C_1,\dots,C_k$ such that each $A\in\M$ satisfies
$C_i\subset A\subset C_j$ with $\mu(C_j\setminus C_i)\leq \eps$,
see~\cite[Definition~2.1.6]{MR4628026}. If $\M$ is a VC class, then
its bracketing number is finite, see~\cite{MR2995797}.

\begin{lemma}
  \label{le:unif_main}
  Suppose that (a)--(c) are satisfied and that $\M$ has finite
  bracketing number in $(\XX,\mu)$. Then
  \begin{equation}
    \label{eq:cv_main}
    \sup_{A\in\M}\bigg|\frac{1}{n}\sum_{i=1}^{n}\|Y_i^{(d)}(A)\|_2^2
    -\mu(A)\bigg|\pto 0\quad\text{as} \ d\to\infty.
  \end{equation}
  In particular, in the {\bf (SI)} setting~\eqref{eq:cv_main} holds
  true under assumptions (a)--(c).
\end{lemma}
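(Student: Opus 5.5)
The plan is a Glivenko--Cantelli argument by bracketing, run in a triangular array. Write $W_i:=\|X^{(d)}(i)\|_2^2$, so that $\|Y_i^{(d)}(A)\|_2^2=W_i\1_{U_i\in A}$. The $W_i$ are nonnegative, independent of $(U_i)$, satisfy $\E W_i=1$ by (a), and are uniformly integrable in $d$ by (c). The key structural fact is monotonicity. If $C\subset A\subset C'$, then
\begin{displaymath}
  \frac1n\sum_{i=1}^n W_i\1_{U_i\in C}\le \frac1n\sum_{i=1}^n\|Y_i^{(d)}(A)\|_2^2\le \frac1n\sum_{i=1}^n W_i\1_{U_i\in C'} .
\end{displaymath}
This reduces the supremum over the possibly infinite class $\M$ to a finite set of brackets.

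Step one is pointwise convergence for a fixed measurable set $C$. The variables $W_i\1_{U_i\in C}$ are i.i.d.\ within each row, nonnegative, and have mean $\mu(C)$ by independence of $X^{(d)}$ and $U$. Their uniform integrability in $d$ follows from (c), since $W\1_{U\in C}\le W$. Lemma~A.1 in~\cite{Kabluchko+Marynych:2024}, the weak law of large numbers in the triangular scheme already used in the proof of Lemma~\ref{lm:set_1}, then gives $n^{-1}\sum_{i=1}^n W_i\1_{U_i\in C}\pto\mu(C)$. If that lemma is stated only for $\|X^{(d)}\|_2^2$, I would instead reprove it directly. One truncates at level $c$, applies Chebyshev's inequality to the bounded part, and controls the remainder in $L_1$ by (c).

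Step two is the bracketing argument. Fix $\eps>0$ and take brackets $C_1,\dots,C_k$ with $k=N_{[]}(\eps,\M,\mu)<\infty$. For each $A\in\M$ choose $C_{i}\subset A\subset C_{j}$ with $\mu(C_j\setminus C_i)\le\eps$. Then $\mu(C_i)\ge\mu(A)-\eps$ and $\mu(C_j)\le\mu(A)+\eps$. By the sandwich above,
\begin{displaymath}
  \sup_{A\in\M}\bigg|\frac1n\sum_{i=1}^n\|Y_i^{(d)}(A)\|_2^2-\mu(A)\bigg|\le \max_{1\le l\le k}\bigg|\frac1n\sum_{i=1}^n W_i\1_{U_i\in C_l}-\mu(C_l)\bigg|+\eps .
\end{displaymath}
The maximum ranges over finitely many sets, independently of $d$. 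By step one and the union bound it tends to zero in probability, so the probability that the supremum exceeds $2\eps$ tends to zero. Since $\eps$ was arbitrary, \eqref{eq:cv_main} follows.

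For the final assertion I would invoke the result quoted before the lemma: a VC class has finite bracketing numbers~\cite{MR2995797}, so the first part applies in the {\bf (SI)} setting. Assumption (d) is not needed here, because no cross terms appear.

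The only point needing care is step one, namely the weak law of large numbers for a triangular array with uniformly integrable but $d$-dependent summands. This is exactly where (c) is used. Everything else is deterministic sandwiching.
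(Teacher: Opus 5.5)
Your proof is correct and follows the paper's route: finite bracketing to reduce the supremum over $\M$ to finitely many sets, combined with the triangular-array weak law of large numbers (Lemma~A.1 of \cite{Kabluchko+Marynych:2024}) for each fixed bracket. Your sandwich is slightly tighter than the paper's, since it avoids the separate term $\max_{l,l'}n^{-1}\sum_i(\|Y_i^{(d)}(C_{l'})\|_2^2-\|Y_i^{(d)}(C_l)\|_2^2)$. Appealing directly to the weak law rather than to Lemma~\ref{lm:set_1} also makes it clear that (d) plays no role.
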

\begin{proof}
  Fix an $\eps>0$ and let $C_1,\dots,C_k$ be measurable sets in $\XX$
  which form the $\eps$-bracketing family for $\M$. If
  $C_l\subset A\subset C_{l'}$ for $A\in\M$, then
  \begin{multline*}
    \bigg|\frac{1}{n}\sum_{i=1}^{n}\|Y_i^{(d)}(A)\|_2^2-\mu(A)\bigg|\leq \frac{1}{n}\sum_{i=1}^{n}\Big(\|Y_i^{(d)}(C_{l'})\|_2^2 -\|Y_i^{(d)}(C_l)\|_2^2\Big)\\
    +\bigg|\frac{1}{n}\sum_{i=1}^{n}\|Y_i^{(d)}(C_l)\|_2^2-\mu(C_l)\bigg|
    +\big(\mu(C_{l'})-\mu(C_l)\big).
  \end{multline*}
  The expression of the first term relies on the fact that
  $\|Y_i^{(d)}(A)\|_2$ is monotone in $A$ with respect to set
  inclusion. The last term is at most $\eps$, and so
  \begin{multline*}
    \sup_{A\in\M}\bigg|\frac{1}{n}\sum_{i=1}^{n}\|Y_i^{(d)}(A)\|_2^2-\mu(A)\bigg|\leq \max_{l,l':\mu(C_{l'}\setminus C_l)\leq\eps}
    \frac{1}{n}\sum_{i=1}^{n}\Big(\|Y_i^{(d)}(C_{l'})\|_2^2
    -\|Y_i^{(d)}(C_l)\|_2^2\Big)\\
    +\max_{l=1,\dots,k}\bigg|\frac{1}{n}\sum_{i=1}^{n}\|Y_i^{(d)}(C_l)\|_2^2-\mu(C_l)\bigg|
    +\eps. 
  \end{multline*}
  Lemma~\ref{lm:set_1} implies that the second term converges to zero
  in probability and that the expressions under the first term
  converge in probability to $\mu(C_{l'}\setminus C_{l})\leq\eps$. Hence,
  \begin{displaymath}
    \Prob{\frac{1}{n}\sum_{i=1}^{n}\Big(\|Y_i^{(d)}(C_{l'})\|_2^2
      -\|Y_i^{(d)}(C_{l})\|_2^2\Big)>2\eps}\to 0\quad \text{as}
    \; d\to\infty. \qedhere
  \end{displaymath}
\end{proof}

\begin{lemma}
  \label{le:scale}
  In the {\bf (SI)} setting suppose that (a), (b) and (d) hold. Then
  \begin{displaymath}
    \sup_{A\in\M}\bigg|\frac{1}{n}\sum_{1\leq i\neq j\leq n}\langle
    Y_i^{(d)}(A),
    Y_j^{(d)}(A)\rangle\bigg|\pto 0\quad\text{as} \
    d\to\infty.
  \end{displaymath}
\end{lemma}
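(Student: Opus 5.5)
The plan is a \emph{first moment in $a$, second moment in $(i,j)$} estimate made uniform over $\M$ by conditioning on the marks. Write
\begin{displaymath}
  Q_n(A):=\sum_{1\leq i\neq j\leq n}\langle Y_i^{(d)}(A),Y_j^{(d)}(A)\rangle
  =\sum_{a=1}^d\sum_{i\neq j}X_a^{(d)}(i)X_a^{(d)}(j)\1_{U_i\in A}\1_{U_j\in A},
\end{displaymath}
and condition on $\mathcal U:=\sigma(U_1,\dots,U_n)$. Given $\mathcal U$, $Q_n(A)$ depends on $A$ only through the index set $I_A:=\{i\leq n: U_i\in A\}$. For each fixed index set it is a degenerate quadratic form in the independent centred vectors $X^{(d)}(i)$.

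The computation in the proof of Lemma~\ref{lm:set_1}, carried out conditionally and using (a), (b), gives, for $A,B\in\M$,
\begin{displaymath}
  \E\big[(Q_n(A)-Q_n(B))^2\,\big|\,\mathcal U\big]\leq 4\,\#(I_A\triangle I_B)\, n\,\max_{a}\E(X_a^{(d)})^2 .
\end{displaymath}
Indeed, only the diagonal pairings $\{i,j\}=\{k,l\}$ with $a=b$ survive, and at least one index must lie in $I_A\triangle I_B$. Hence, conditionally on $\mathcal U$, the process $A\mapsto n^{-1}Q_n(A)$ has $L_2$-increments bounded by $2\,m_d^{1/2}\,\rho_{\mu_n}(A,B)$. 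Here $m_d:=\max_a\E(X_a^{(d)})^2\to0$ by (d), and $\mu_n$ is the empirical measure of $U_1,\dots,U_n$.

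Since $\M$ is a VC class, Sauer's lemma and Haussler's bound control the number of distinct traces: the covering numbers of $\M$ in $\rho_{\mu_n}$ satisfy $N(\delta,\M,\rho_{\mu_n})\leq (K/\delta)^{2V}$ uniformly in $n$ and in the sample, with $V$ the VC dimension. I then run a truncated chaining argument. Fix $\delta>0$ and a $\delta$-net $\mathcal N_\delta$ for $\rho_{\mu_n}$. On the net, the union bound and Chebyshev's inequality give
\begin{displaymath}
  \Prob{\max_{C\in\mathcal N_\delta}|Q_n(C)|>n\eps\,\Big|\,\mathcal U}\leq (K/\delta)^{2V}\eps^{-2}m_d ,
\end{displaymath}
which tends to $0$ for fixed $\delta$.

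It remains to control the oscillation $\sup\{|Q_n(A)-Q_n(C)|/n:\rho_{\mu_n}(A,C)\leq\delta\}$. For this I would not use pure $L_2$ chaining, because polynomial covering numbers against only second moments do not sum. Instead I would exploit the algebraic identity
\begin{displaymath}
  Q_n(A)-Q_n(C)=\|S_n(A)\|^2-\|S_n(C)\|^2-\sum_i\big(\|Y_i(A)\|^2-\|Y_i(C)\|^2\big).
\end{displaymath}
The diagonal part is uniformly small by Lemma~\ref{le:unif_main} together with $\mu$-closeness. The remaining difference of squared norms is bounded, using $\big|\|x\|^2-\|y\|^2\big|\leq\|x-y\|(\|x\|+\|y\|)$, by
\begin{displaymath}
  \|S_n(A)-S_n(C)\|\cdot\big(\|S_n(A)\|+\|S_n(C)\|\big).
\end{displaymath}
This gives a self-improving inequality in the quantity $T_n:=\sup_{A\in\M}n^{-1}\|S_n(A)\|^2$, provided $\sup_{\rho_{\mu_n}(A,C)\leq\delta}n^{-1/2}\|S_n(A)-S_n(C)\|$ is $O(\delta^{1/2})$-small with high probability.

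The main obstacle is exactly this modulus of continuity. One must show that partial sums over index sets $I_A\triangle I_C$ of empirical size at most $n\delta^2$, drawn from a VC family, cannot be aligned to have large norm. My first attempt would be to bound $\|S_n(A\setminus C)\|^2$ for $C\subset A$ via the same decomposition on the smaller class of differences $\{A\setminus C\}$, which is again VC. Combined with the uniform bound on the diagonal and the bracketing from Lemma~\ref{le:unif_main}, this should yield an inequality of the form $\sup\|S_n(A\setminus C)\|^2/n\leq \delta^2+o_{\mathbb P}(1)+\theta T_n$ with $\theta<1$. Absorbing the $\theta T_n$ term gives tightness of $T_n$ and then the desired oscillation bound. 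Letting first $d\to\infty$ and then $\delta\downarrow0$ completes the argument. If this absorption fails, the fallback is to establish the oscillation bound through a direct maximal inequality for the quadratic form along dyadic levels of the net, using Sauer's lemma at each level.
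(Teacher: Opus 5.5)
Your conditional computation $\E[(Q_n(A)-Q_n(B))^2\mid\mathcal U]\le 4n\,\#(I_A\triangle I_B)\,m_d$ and the union bound over a fixed $\rho_{\mu_n}$-net are correct. The gap is the uniform oscillation bound, which is where the real difficulty lies. You never prove it, and neither suggested route can supply it.

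\emph{The self-improving inequality is circular.} To make $\sup n^{-1/2}\|S_n^{(d)}(A)-S_n^{(d)}(C)\|_2$ small over $\rho_{\mu_n}$-close pairs, you need uniform control of the off-diagonal form $Q_n$ over the VC class $\{A\setminus C\}$. That is the present lemma for another VC class. In the paper, this modulus of continuity is obtained only \emph{after} Theorem~\ref{theo:un_two}, which rests on this lemma. Nothing in your sketch produces a contraction factor $\theta<1$, and iterating only moves to difference classes of growing VC dimension.

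\emph{The route also needs an extra hypothesis.} Going through $\|S_n^{(d)}(A)\|_2^2$ forces you to control the diagonal term via Lemma~\ref{le:unif_main}. That lemma needs the uniform integrability (c), which is not assumed here.

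\emph{The dyadic fallback fails, for the reason you yourself note.} Conditionally on $\mathcal U$, the increments are quadratic forms in vectors with only two moments, so Chebyshev is all you have. The union bound at scale $2^{-k}$ then costs about $2^{2Vk}\,m_d\,2^{-2k}/\eps_k^2$. Chaining down to $2^{-k}\approx n^{-1/2}$, where distinct traces separate, leaves a bound of order $m_d\,n^{V-1}$ up to powers of $\log n$, and these logarithmic factors survive even for $V=1$. This need not vanish, since $n=n(d)$ is arbitrary relative to the rate in (d).

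The missing ingredient is a randomization that yields exponential-type tails. The paper uses the maximal inequality for degenerate U-processes (Theorem~6 of~\cite{MR888439}). After symmetrization and decoupling, which is what the measure $T_n$ on $2n$ copies of $Z$ encodes, one conditions on the whole sample $Z_i=(X^{(d)}(i),U_i)$. The remaining randomness is an order-two chaos in independent random signs, whose increments are sub-exponential at the scale of the $L_2(T_n)$-distance.

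Chaining then only requires $\int_0^{\theta_n/\tau_n}\log N(\eps,\rho_n,\sF_{\M})\,\d\eps<\infty$, note $\log N$ rather than its square root. This holds because $\sF_{\M}$ is VC-subgraph with polynomial uniform covering numbers. The envelope $F(z,z')=|\langle x,x'\rangle|$ satisfies $\E F(Z,Z')^2=\sum_a(\E(X_a^{(d)})^2)^2\le m_d$. This gives $n^{-1}\E\sup_{A\in\M}|Q_n(A)|\le C\,m_d^{1/2}$ directly, using only (a), (b), (d) and with no net/oscillation split.
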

\begin{proof}
  Define functions $f_A: (\R^d\times\XX)^2\to\R$ by letting
  \begin{displaymath}
    f_A(z,z'):=\langle x\1_{u\in A}, x'\1_{u'\in A}\rangle=\langle x,
    x'\rangle\1_{(u,u')\in A\times A},
    \quad z:=(x,u),\ z':=(x',u').
  \end{displaymath}
  The collection $\sF_{\M}=\{f_A, A\in\M\}$ is a VC-subgraph class. Indeed,
  Lemma~2.6.19(vii) of \cite{MR4628026} implies that $\M\times\M$ is a VC
  class in $\XX\times\XX$ and the subgraph of $f_A$ can be written as
  the intersection of a set determined by $x,x'$ with the set
  determined by $u,u'$.

  The envelope of $\sF_{\M}$ is given by
  \begin{displaymath}
    F(z,z'):=\sup_{A\in \M}|f_A(z,z')|=|\langle x,x' \rangle|.
  \end{displaymath}
  By Markov's inequality, for all $\delta>0$,
  \begin{displaymath}
    \Prob{\sup_{A\in\M}\bigg|\frac{1}{n}\sum_{1\leq i\neq j\leq n}\langle
      Y_i^{(d)}(A), Y_j^{(d)}(A)\rangle\bigg|\geq \delta}
    \leq \delta^{-1}\frac{1}{n}\E
    \sup_{A\in\M}\Big|\sum_{1\leq i\neq j\leq n}f_A(Z_i,Z_j)\Big|.
  \end{displaymath}
  We aim to show that the right-hand side converges to zero as
  $d\to\infty$. 

  Let $Z_i$, $1\leq i\leq 2n$, be independent copies of
  $Z$. Write $T_n$ for the measure that places mass 1 on each of the
  pairs $(Z_i, Z_j)$ for $1\leq i,j\leq 2n$ with the exception of the
  $4n$ pairs for which $i=j$, $1\leq i\leq 2n$; $i=j-n$, $1\leq i\leq
  n$; $i=j+n$, $n+1\leq i\leq 2n$. The integral of $f_A^2$ with
  respect to $T_n$ is given by
  \begin{multline*}
    T_n f_A^2=\sum_{1\leq i\neq j\leq n}f_A(Z_i,Z_j)^2
    +\sum_{n+1\leq i\neq j\leq 2n}f_A(Z_i,Z_j)^2\\
    +\sum_{i=1}^{n}\sum_{\substack{j=n+1\\ j\neq i+n}}^{2n} f_A(Z_i,Z_j)^2
    +\sum_{i=n+1}^{2n}\sum_{\substack{j=1\\ j\neq i-n}}^{n} f_A(Z_i,Z_j)^2.
  \end{multline*}
  Next, define $\tau_n=(T_n F^2)^{1/2}$ and bound $\E\tau_n$ as 
  \begin{multline*}
    \E\tau_n
    \leq \bigg(\E\Big(\sum_{1\leq i\neq j\leq n}F(Z_i,Z_j)^2
      +\sum_{n+1\leq i\neq j\leq 2n}F(Z_i,Z_j)^2\\
      +\sum_{i=1}^{n}\sum_{\substack{j=n+1\\ j\neq i+n}}^{2n}F(Z_i,Z_j)^2
    +\sum_{i=n+1}^{2n}\sum_{\substack{j=1\\ j\neq i-n}}^{n}F(Z_i,Z_j)^2\Big)\bigg)^{1/2}
    \leq \Big(4n(n-1) \E F(Z,Z')^2\Big)^{1/2},
  \end{multline*}
  where $Z'=(X',U')$ is an independent copy of $Z=(X,U)$. Finally,
  \begin{align*}
    \E\tau_n
    \leq \Big(4n(n-1) \E F(Z,Z')^2\Big)^{1/2}
    &=\Big(4n(n-1)\sum_{a=1}^{d}\E(X_a^{(d)})^2\E\big((X_a^{(d)})'\big)^2\Big)^{1/2}\\
    &\leq 2n\big(\max_{1\leq a\leq d}\E(X_a^{(d)})^2\big)^{1/2}.
  \end{align*}
  Using 
    \begin{displaymath}
    f_A(Z_i,Z_j)^2=|\langle Y_i^{(d)}(A), Y_j^{(d)}(A)\rangle|^2
    \leq |\langle X^{(d)}(i), X^{(d)}(j) \rangle|^2=F(Z_i,Z_j)^2,
  \end{displaymath}
  we conclude that
  \begin{displaymath}
    \theta_n:=\frac{1}{4}\sup_{A\in\M}\, (T_n f_A^2)^{1/2}\leq \tau_n/4.
  \end{displaymath}
  Let $N(\eps,\rho_n,\sF_{\M})$ be the $\eps$-covering number of $\sF_{\M}$ under
  the metric
  \begin{displaymath}
    \rho_n(f_A,f_B):=\big(T_n(f_A-f_B)^2/\tau_n^2\big)^{1/2}.
  \end{displaymath}
  Note that
  \begin{displaymath}
    N(\eps,\rho_n,\sF_{\M})=N(\eps(Q_nF^2)^{1/2},\rho'_n,\sF_{\M}),
  \end{displaymath}
  where $Q_n=T_n/(4n(n-1))$ is a probability measure and the
  right-hand side is the covering number of $(\sF_{\M},\rho'_n)$ with
  $\rho'_n(f_A,f_B):=\big(Q_n(f_A-f_B)^2\big)^{1/2}$. If $\M$ is a VC
  class, then Theorem~2.6.7 of \cite{MR4628026} yields that, for all
  $\eps\in(0,1)$,
  \begin{displaymath}
    N(\eps,\rho_n,\sF_{\M})\leq K_1\eps^{-K_2},
  \end{displaymath}
  where $K_1$ and $K_2$ are positive constants. 
  Applying Theorem~6 of~\cite{MR888439},
  \begin{align*}
    \frac{1}{n}\E\sup_{A\in\M}\Big|\sum_{1\leq i\neq j\leq n}&f_A(Z_i,Z_j)\Big|
    \leq \frac{C}{n}\E\Big(\theta_n+\tau_n\int_{0}^{\theta_n/\tau_n}\log
      N(\eps,\rho_n,\sF_{\M})\d\eps\Big)\\
    &\leq C\big(\max_{1\leq a\leq d}
      \E(X_a^{(d)})^2\big)^{1/2}
      \Big(\frac{1}{2}+2\int_{0}^{1/4}\big(\log K_1+K_2\log\eps^{-1}\big)\d\eps\Big)\\
    &=C\big(\max_{1\leq a\leq d}\E(X_a^{(d)})^2\big)^{1/2}
      \Big(\frac{1}{2}+\frac{\log K_1}{2}+\frac{K_2}{2}(1+\log 4)\Big),
  \end{align*}
  where $C>0$ is a constant. By (d), the right-hand side converges to
  zero.  
\end{proof}

\begin{theorem}
  \label{theo:un_two}
 In the {\bf (SI)} setting suppose that (a)--(d) hold. Then 
  \begin{displaymath}
    \sup_{A\in\M}\Big|n^{-1/2}\|S_n^{(d)}(A)\|_2-\mu(A)^{1/2}\Big|
    \pto 0\quad\text{as}\ d\to\infty.
  \end{displaymath}
\end{theorem}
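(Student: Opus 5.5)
The plan is to combine Lemma~\ref{le:unif_main} and Lemma~\ref{le:scale} through the decomposition already used in the proof of Lemma~\ref{lm:set_1}. For every $A\in\M$ we have
\begin{displaymath}
  n^{-1}\|S_n^{(d)}(A)\|_2^2
  =\frac{1}{n}\sum_{i=1}^{n}\|Y_i^{(d)}(A)\|_2^2
  +\frac{1}{n}\sum_{1\leq i\neq j\leq n}\langle Y_i^{(d)}(A),Y_j^{(d)}(A)\rangle .
\end{displaymath}
Subtracting $\mu(A)$, taking the supremum over $A\in\M$ and applying the triangle inequality bounds
\begin{displaymath}
  \sup_{A\in\M}\big|n^{-1}\|S_n^{(d)}(A)\|_2^2-\mu(A)\big|
\end{displaymath}
by the sum of the two suprema in \eqref{eq:cv_main} and in Lemma~\ref{le:scale}. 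Since $\M$ is a VC class, it has finite bracketing number (as recalled before Lemma~\ref{le:unif_main}). Hence the first supremum tends to zero in probability under (a)--(c), and the second does so under (a), (b), (d). This gives uniform convergence in probability of the squared norms to $\mu(A)$.

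Next we pass from squares to square roots. For nonnegative reals $x,y$ we use the elementary inequality $|\sqrt{x}-\sqrt{y}|\leq |x-y|^{1/2}$, which follows from $|\sqrt{x}-\sqrt{y}|^2\leq|\sqrt{x}-\sqrt{y}|(\sqrt{x}+\sqrt{y})=|x-y|$. Applying it with $x=n^{-1}\|S_n^{(d)}(A)\|_2^2$ and $y=\mu(A)$ gives
\begin{displaymath}
  \sup_{A\in\M}\Big|n^{-1/2}\|S_n^{(d)}(A)\|_2-\mu(A)^{1/2}\Big|
  \leq \Big(\sup_{A\in\M}\big|n^{-1}\|S_n^{(d)}(A)\|_2^2-\mu(A)\big|\Big)^{1/2},
\end{displaymath}
and the right-hand side tends to zero in probability by the first step.

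There is no real obstacle left here; the hard work was done in Lemma~\ref{le:scale} via the decoupling and chaining bound for $U$-processes. The only point to check is measurability of the suprema over the possibly uncountable class $\M$. One option is to read the convergence in outer probability, as is standard in empirical process theory. The other is to note that $S_n^{(d)}(A)$ depends on $A$ only through the finite trace $\{i: U_i\in A\}$, so for the sums the supremum is effectively a maximum over finitely many values. The term $\mu(A)$ does vary continuously within each trace class, but the bounds above are pointwise in $A$, so the argument is unaffected.
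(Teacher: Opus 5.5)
Your proof is correct and is essentially the paper's argument. You use the same split of $n^{-1}\|S_n^{(d)}(A)\|_2^2$ into the diagonal sum, handled by Lemma~\ref{le:unif_main}, and the off-diagonal sum, handled by Lemma~\ref{le:scale}; beyond that, you only spell out the passage from squared norms to norms via $|\sqrt{x}-\sqrt{y}|\le|x-y|^{1/2}$ (and add a remark on measurability), which the paper leaves implicit in ``it suffices to prove''.
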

\begin{proof}
  It suffices to prove that
  \begin{equation}
    \label{eq:6-aux}
    \sup_{A\in\M}\Big|n^{-1}\|S_n^{(d)}(A)\|_2^2-\mu(A)\Big|
    \pto 0\quad\text{as}\ d\to\infty.
  \end{equation}
  We write
  \begin{displaymath}
    \|S_n^{(d)}(A)\|_2^2
    =\sum_{i=1}^n \|Y_i^{(d)}(A)\|_2^2
    +\sum_{1\leq i\neq j\leq n} \langle Y_i^{(d)}(A),
    Y_j^{(d)}(A)\rangle. 
  \end{displaymath}
  Then \eqref{eq:6-aux} follows from Lemma~\ref{le:unif_main} and
  Lemma~\ref{le:scale}. 
\end{proof}

We now finish the proof of part two of
Theorem~\ref{theo:GH_conv_to_wiener_spiral}.  By Corollary~7.3.28
of~\cite{MR1835418}, the Gromov--Hausdorff distance between the spaces
$(\MM^{(d)}_n(\M),\|\cdot\|_2)$ and $(\M,\rho_{\mu})$ is bounded above
by
\begin{displaymath}
  2\sup_{A,B\in\M}\Big|n^{-1/2}\|S_n^{(d)}(A)-S_n^{(d)}(B)\|_2
  -\mu(A\triangle B)^{1/2}\Big|.
\end{displaymath}
Since $\M$ is a VC class, for each $\delta>0$, Theorem~2.6.4
of~\cite{MR4628026} implies that the $\delta$-covering number $m$ of
$(\M,L_1)$ with $L_1(A,B)=\mu(A\triangle B)$, $A, B\in\M$, is bounded
from above. This is equivalent to the fact that there exists a finite
collection $A_1,\ldots,A_m\in\M$ such that, for each $A\in\M$, there
exists a $k\in\{1,\dots,m\}$ with $\mu(A\triangle A_k)\leq\delta$.

By the triangle inequality,
\begin{multline*}
  \sup_{A,B\in\M}\Big|n^{-1/2}\|S_n^{(d)}(A)-S_n^{(d)}(B)\|_2
  -\mu(A\triangle B)^{1/2}\Big|\\
  =\max_{1\leq k,l\leq m}\sup_{\substack{A,B\in\M\\
      \mu(A\triangle A_k)\leq\delta, \mu(B\triangle A_l)\leq\delta}}
  \Big|n^{-1/2}\|S_n^{(d)}(A)-S_n^{(d)}(B)\|_2
  -\mu(A\triangle B)^{1/2}\Big|
  \leq J_1+J_2+J_3,
\end{multline*}
where
\begin{align*}
  J_1&=\max_{1\leq k,l\leq m}\sup_{\substack{A,B\in\M\\
  \mu(A\triangle A_k)\leq\delta, \mu(B\triangle A_l)\leq\delta}}
  n^{-1/2}\Big|\|S_n^{(d)}(A)-S_n^{(d)}(B)\|_2
  -\|S_n^{(d)}(A_k)-S_n^{(d)}(A_l)\|_2\Big|\\
  J_2&=\max_{1\leq k,l\leq m}
       \Big|n^{-1/2}\|S_n^{(d)}(A_k)-S_n^{(d)}(A_l)\|_2
       -\mu(A_k\triangle A_l)^{1/2}\Big|,\\
  J_3&=\max_{1\leq k,l\leq m}\sup_{\substack{A,B\in\M\\
  \mu(A\triangle A_k)\leq\delta, \mu(B\triangle A_l)\leq\delta}}
  \Big|\mu(A_k\triangle A_l)^{1/2}-\mu(A\triangle
  B)^{1/2}\Big|\leq (2\delta)^{1/2}.
\end{align*}
The term $J_2$ tends to zero in probability as $d\to\infty$ by Corollary~\ref{cor:on-dim_conv_SI_setting} and $J_1$ is bounded by
\begin{displaymath}
  \max_{1\leq k\leq m}\sup_{\substack{A\in\M\\
      \mu(A\triangle A_k)\leq\delta}}
  n^{-1/2}\|S_n^{(d)}(A)-S_n^{(d)}(A_k)\|_2
  +\max_{1\leq l\leq m}\sup_{\substack{B\in\M\\
      \mu(B\triangle A_l)\leq\delta}}
  n^{-1/2}\|S_n^{(d)}(B)-S_n^{(d)}(A_l)\|_2.
\end{displaymath}
Thus, it suffices to show that
\begin{multline*}
  \max_{1\leq k\leq m}\sup_{\substack{A\in\M\\
      \mu(A\triangle A_k)\leq\delta}}
  n^{-1/2}\|S_n^{(d)}(A)-S_n^{(d)}(A_k)\|_2\\
  \leq \max_{1\leq k\leq m}\sup_{\substack{A\in\M\\
      \mu(A\setminus A_k)\leq\delta}}
  n^{-1/2}\|S_n^{(d)}(A\setminus A_k)\|_2
  +\max_{1\leq k\leq m}\sup_{\substack{A\in\M\\
      \mu(A_k\setminus A)\leq\delta}}
  n^{-1/2}\|S_n^{(d)}(A_k\setminus A)\|_2
  \pto 0.
\end{multline*}
By Lemma 2.6.19 of \cite{MR4628026},  the family $\M_k'=\{A\setminus A_k\subseteq\XX, A\in\M\}$
is a VC class for each fixed $1\leq k\leq m$. For each $k\in\{1,\dots,m\}$,
\begin{multline*}
  \sup_{A\in\M, \mu(A\setminus A_k)\leq\delta}
  n^{-1/2}\|S_n^{(d)}(A\setminus A_k)\|_2
  =\sup_{A'\in\M_k', \mu(A')\leq\delta}
  n^{-1/2}\|S_n^{(d)}(A')\|_2\\
  \leq \sup_{A'\in\M_k', \mu(A')\leq\delta}
  \Big| n^{-1/2}\|S_n^{(d)}(A')\|_2-\mu(A')^{1/2}\Big|+\delta^{1/2},
\end{multline*}
where the first term converges to zero as
$d\to\infty$ by Theorem~\ref{theo:un_two}, and the second term can
be made arbitrarily small by letting $\delta\downarrow 0$. Finally,
the proof is completed by following the same argument for
$A_k\setminus A$ for each $k\in\{1,\dots,m\}$.

\section{Appendix}

For monotone functions of a single real variable the following results
is known in the literature as P\'olya's extension of the Dini theorem,
see Exercise~127 on p.~81 in~\cite{Polya}. Below is a multivariate
generalization of this result. Even though the proof is identical, we
have not been able to locate it in the literature and provide it for
completeness.

\begin{lemma}[Multivariate P\'olya's extension of the Dini theorem]\label{lem:dini}
  For each $n\in\mathbb{N}$, let $f_n:[0,1]^m\to \mathbb{R}$ be a
  coordinatewise monotone function. Assume that
  $\lim_{n\to\infty}f_n({\bf x})=f({\bf x})$ for all
  ${\bf x}\in [0,1]^m$ and $f$ is continuous on $[0,1]^m$. Then
  $$
  \lim_{n\to\infty}\sup_{{\bf x}\in [0,1]^m}|f_n({\bf x})-f({\bf x})|=0.
  $$
\end{lemma}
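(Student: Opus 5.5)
We follow the classical one-dimensional argument: uniform continuity of $f$ on the compact cube, combined with coordinatewise monotonicity, lets us sandwich $f_n$ between its values at finitely many grid points. Pointwise convergence at those finitely many points is then uniform for free. We will assume each $f_n$ is coordinatewise non-decreasing, i.e.\ ${\bf x}\leq{\bf y}$ componentwise implies $f_n({\bf x})\leq f_n({\bf y})$. This is the case used in the paper, where the partial sums of $\|X^{(d)}({\bf i})\|_2^2$ are non-decreasing in every coordinate. The limit $f$ then inherits the same monotonicity. Other monotonicity directions reduce to this one after the reflection $x_s\mapsto 1-x_s$ of the relevant coordinates, provided the direction of monotonicity in each coordinate is the same for all $n$.

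Fix $\eps>0$. Since $f$ is continuous on the compact set $[0,1]^m$, it is uniformly continuous, so there is $l\in\N$ such that $|f({\bf x})-f({\bf y})|\leq\eps$ whenever $\max_s|x_s-y_s|\leq 1/l$. Consider the finite grid $G_l=\{{\bf k}/l:{\bf k}\in\{0,\dots,l\}^m\}$. By pointwise convergence on this finite set, there is $n_0$ such that $|f_n({\bf g})-f({\bf g})|\leq\eps$ for all ${\bf g}\in G_l$ and $n\geq n_0$.

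Now let ${\bf x}\in[0,1]^m$ be arbitrary. Choose ${\bf k}\in\{0,\dots,l-1\}^m$ with ${\bf k}/l\leq{\bf x}\leq({\bf k}+{\bf 1})/l$, and set ${\bf a}={\bf k}/l$ and ${\bf b}=({\bf k}+{\bf 1})/l$, both in $G_l$. Monotonicity gives
\begin{displaymath}
f_n({\bf a})-f({\bf b})\leq f_n({\bf x})-f({\bf x})\leq f_n({\bf b})-f({\bf a}).
\end{displaymath}
The upper bound satisfies $f_n({\bf b})-f({\bf a})\leq |f_n({\bf b})-f({\bf b})|+|f({\bf b})-f({\bf a})|\leq 2\eps$, and symmetrically the lower bound is at least $-2\eps$. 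Hence $\sup_{{\bf x}}|f_n({\bf x})-f({\bf x})|\leq 2\eps$ for $n\geq n_0$, which proves the lemma.

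There is no real obstacle here. The only point requiring care is that the two-sided sandwich needs monotonicity with respect to the componentwise partial order. That is exactly what coordinatewise monotonicity provides, by applying the monotone step in one coordinate at a time along a path from ${\bf a}$ to ${\bf x}$ and then from ${\bf x}$ to ${\bf b}$.

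In the application the convergence is only in probability. There one applies the same grid argument with the union bound over the finite grid $G_l$, or equivalently argues along almost surely convergent subsequences. The deterministic bound above transfers verbatim.
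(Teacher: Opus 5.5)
Your proposal is correct and follows essentially the same route as the paper: you choose a grid of mesh $1/l$ using continuity of $f$, use pointwise convergence at the finitely many grid points, and sandwich $f_n({\bf x})$ between the values of $f_n$ at the lower and upper corners of the grid cell containing ${\bf x}$ to get the bound $2\eps$. Two small differences make your version slightly more careful than the paper's. You control $f({\bf b})-f({\bf a})$ by uniform continuity in the max-norm, where the paper uses an increment condition. You also assume explicitly that all $f_n$ are monotone in the same direction, which spells out what the paper leaves as ``without loss of generality''.
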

\begin{proof}
  The limit function is coordinatewise monotone; assume without loss
  of generality that it is nondecreasing. Set $f({\bf x})=0$ if at
  least one components of ${\bf x}$ is negative. Using the continuity
  of $f$ and given $\eps>0$, we can find $N\in\mathbb{N}$ such that
  \begin{displaymath}
    f\left(\frac{i_1}{N},\ldots,\frac{i_m}{N}\right)
    -f\left(\frac{i_1-1}{N},\ldots,\frac{i_m-1}{N}\right)
    \leq \eps,\quad (i_1,\ldots,i_m)\in\{0,\ldots,N\}^m.
  \end{displaymath}
  By the pointwise convergence we can pick $n_0\in\mathbb{N}$ such
  that for all $0\leq i_1,\ldots,i_m\leq N$
  \begin{equation}\label{eq:dini_proof1}
    \left|f_n\left(\frac{i_1}{N},\ldots,\frac{i_m}{N}\right)
      -f\left(\frac{i_1}{N},\ldots,\frac{i_m}{N}\right)\right|\leq \eps,\quad n\geq n_0.
  \end{equation}
  Given ${\bf x} \in [0,1]^m$ we can find a parallelepiped
  $\prod_{j=1}^{m}[(i_j-1)/N,i_j/N]$ such that
  ${\bf x}\in \prod_{j=1}^{m}[(i_j-1)/N,i_j/N]$ and, using
  monotonicity, for all $n\geq n_0$,
  \begin{multline*}
    f\left(\frac{i_1-1}{N},\ldots,\frac{i_m-1}{N}\right)-\eps
    \leq f_n\left(\frac{i_1-1}{N},\ldots,\frac{i_m-1}{N}\right)
    \leq f_n({\bf x})\\
    \leq
    f_n\left(\frac{i_1}{N},\ldots,\frac{i_m}{N}\right)
    \leq f\left(\frac{i_1}{N},\ldots,\frac{i_m}{N}\right)+\varepsilon.
  \end{multline*}
  Hence,~\eqref{eq:dini_proof1} implies $|f_n({\bf x})-f({\bf x})|\leq 2\varepsilon$. The proof is complete.
\end{proof}

By a simple scaling argument, the claim of Lemma~\ref{lem:dini}
remains valid with any pa\-ra\-lle\-le\-pi\-ped in $\R^m$ in place of
$[0,1]^m$.

A natural question is whether the same conclusion holds for a sequence
of coordinatewise monotone functions defined on an arbitrary compact
convex set $K$, rather than on the unit cube $[0,1]^m$. The answer is
negative. The obstruction is that a compact convex set may contain
large antichains with respect to the coordinatewise order; on such
sets, coordinatewise monotonicity imposes little or no
restriction. For example, let $m=2$ and set
$ K:=\{(t,1-t): t\in[0,1]\}$. Then $K$ is a compact convex subset of
$\R^2$, and any two distinct points of $K$ are incomparable. Hence
every function $f_n:K\to\R$ is coordinatewise monotone, while uniform
convergence may clearly fail.

On the other hand, the conclusion does hold under the additional
assumption that, at every sufficiently small scale, $K$ can be covered
by finitely many coordinatewise order intervals whose endpoints belong
to $K$. We do not formulate this more general version of
Lemma~\ref{lem:dini}, as it is not needed for our purposes.

\vspace{3mm}

{\bf Acknowledgment.} IM is grateful to Lutz D\"umbgen for advice concerning empirical
processes.

\end{document}